\definecolor{red}{rgb}{0.7,0.15,0.15}
\definecolor{green}{rgb}{0,0.5,0}
\definecolor{blue}{rgb}{0,0,0.7}
\makeatletter \@addtoreset{equation}{section}
\newtheorem{theorem}{Theorem}[section]
\newtheorem{corollary}[theorem]{Corollary}
\newtheorem{lemma}[theorem]{Lemma}
\newtheorem{proposition}[theorem]{Proposition}
\newtheorem{definition}[theorem]{Definition}
\theoremstyle{definition}
\newtheorem{remark}[theorem]{Remark}
\newcommand\cA{\mathcal A}
\newcommand\cF{\mathcal F}
\newcommand\cL{\mathcal L}
\newcommand\cP{\mathcal P}
\newcommand\cW{\mathcal W}
\newcommand\cX{\mathcal X}
\newcommand\cY{\mathcal Y}
\newcommand\sA{\mathscr A}
\newcommand\FP{\textnormal{FP}}
\newcommand\CFP{{\rm CFP}}
\newcommand\law{{\rm Law}}
\newcommand{\smalltext}[1]{\text{\fontsize{4}{4}\selectfont$#1$}}
\newcommand\1{\mathbf{1}}
\newcommand{\vertiii}[1]
{{\left\vert\kern-0.25ex\left\vert\kern-0.25ex\left\vert #1 \right\vert\kern-0.25ex\right\vert\kern-0.25ex\right\vert}}
\newcommand\fg{\mathfrak g}
\newcommand\cplbc{\mathfrak {\rm Cpl}_{\rm bc}}
\newcommand\cplmc{\mathfrak {\rm Cpl}_{\rm mc}}
\def \E{\mathbb{E}}
\def \F{\mathbb{F}}
\def \N{\mathbb{N}}
\def \P{\mathbb{P}}
\def \Q{\mathbb{Q}}
\def \R{\mathbb{R}}
\def \W{\mathbb{W}}
\def \X{\mathbb{X}}
\def \Y{\mathbb{Y}}
\def\Ac{\mathcal{A}}
\def\Fc{\mathcal{F}}
\def\Pc{\mathcal{P}}
\def\Wc{\mathcal{W}}
\def\Xc{\mathcal{X}}
\def\Yc{\mathcal{Y}}
\def\Zc{\mathcal{Z}}
\def\d{\mathrm{d}}
\begin{document}

\title{Absolutely Continuous Curves of Stochastic Processes}

\author{Beatrice Acciaio\footnote{ETH Zurich, Department of Mathematics, Switzerland, beatrice.acciaio@math.ethz.ch.} \quad Daniel Kr\v{s}ek\footnote{ETH Zurich, Department of Mathematics, Switzerland, daniel.krsek@math.ethz.ch.} \quad Gudmund Pammer\footnote{TU Graz,  Institute of Statistics, Austria, gudmund.pammer@tugraz.at.} \quad Marco Rodrigues\footnote{ETH Zurich, Department of Mathematics, Switzerland, marco.rodrigues@math.ethz.ch.}}
	
\date{\today}
	
\maketitle
		
\begin{abstract}
We study absolutely continuous curves in the adapted Wasserstein space of filtered processes. We provide a probabilistic representation of such curves as flows of adapted processes on a common filtered probability space, extending classical results to the adapted setting. Moreover, we characterize geodesics in this space and derive an adapted Benamou--Brenier-type formula by reformulating adapted optimal transport as an energy minimization problem. As an application, we obtain a Skorokhod-type representation for sequences of filtered processes under the adapted weak topology.

\medskip
\noindent{\bf Key words:} Adapted Wasserstein distance, filtered processes, absolutely continuous curves

\vspace{5mm} 
\end{abstract}

\section{Introduction}

This paper extends results on absolutely continuous curves of probability measures with respect to the Wasserstein distance established by \citeauthor*{ambrosio2008gradient} \cite{ambrosio2008gradient} and \citeauthor*{lisini2007characterization} \cite{lisini2007characterization,lisini2016absolutely} to a dynamic framework involving filtered stochastic processes equipped with the adapted Wasserstein distance. 

Optimal transport offers a powerful mathematical framework that has been of increasing interest in recent years, and has revealed a geometric structure on the set of probability measures, which has proven to be valuable for applications in PDEs, image processing, machine learning, economics, and finance, among others. The study of absolutely continuous curves in the Wasserstein space serves as a cornerstone of the theory of gradient flows in this space and their wide applications, see for instance \cite{ambrosio2008gradient,jordan1998variational,lacker2023independent,mielke2004,otto2001,villani2009optimal}. One of the classical results asserts that, given a curve of measures on $\R^d$ that is absolutely continuous in the Wasserstein space, one can find a measure on absolutely continuous curves on $\R^d$ that ``couples'' the given flow of marginals. The converse is also true: given a measure on absolutely continuous curves, the corresponding projections necessarily form a continuous curve in the Wasserstein space. While the latter is straightforward, the former result is significantly nontrivial and allows to transition from absolute continuity of distributions to almost sure absolute continuity. That also leads to other crucial results, such as the probabilistic representation of absolutely continuous curves, the continuity equation, and the Benamou--Brenier formulation of the optimal transport problem.

While the geometry of the Wasserstein space and the related theory are well understood in $\R^d$ and, more generally, separable Hilbert spaces (see, e.g., \citeauthor*{ambrosio2008gradient} \cite{ambrosio2008gradient}), they pose challenges in more general metric spaces due to the lack of Hilbertian structure and potential infinite-dimensionality. A breakthrough work in this direction by \citeauthor*{lisini2007characterization} \cite{lisini2007characterization} studies absolutely continuous curves of measures on general metric spaces, results of whom were later generalized by \citeauthor*{stepanov2017three} \cite{stepanov2017three}. Motivated by these works, we study absolutely continuous curves of stochastic processes in the adapted Wasserstein space. We also refer to the forthcoming work of \citeauthor*{pinzi2025} \cite{pinzi2025}, in which they study absolutely continuous curves and the associated continuity equation in the space $\Pc(\Pc(\Xc))$, which is closely related to the adapted Wasserstein space.

As the Wasserstein distance and the weak topology have their limitations when dealing with laws of stochastic processes, illustrated, for example, by the discontinuity of the Doob decomposition or optimal stopping in this topology, we consider the adapted Wasserstein distance and the adapted weak topology; see e.g.\ \cite{BiTa19,NiSu20,pflug2014multistage,Ve70,Ve94}. This refinement of classical optimal transport theory, which takes into account the temporal structure of information by imposing certain non-anticipativity conditions on transport plans or couplings, has recently seen significant developments. In particular, it has been shown in the discrete-time setting that the adapted Wasserstein distance is the appropriate metric for comparing stochastic processes while accounting for their temporal structure. This is illustrated by their vast applications in stochastic analysis, mathematical finance, and machine learning; see for instance \cite{acciaio2024designing,AcBaZa20,acciaio2021weak,AcKrPa23a,BaBaBeEd19a,BaBaBeEd19b,BaWi23,BaBePa21,BaHa23,BoLiOb23,HoLeLiLySa23,JoPa23,PfPi12,sauldubois2024first,jiang2024sensitivity,jiang2025transfer}.

This framework allows us to define a geometry akin to the geometry of the Wasserstein space on the laws of stochastic processes or, more generally, on filtered processes -- the class of stochastic processes together with their filtrations. The present manuscript is devoted to the study of absolutely continuous curves of filtered processes, representing a first step toward a deeper understanding of the geometry of this complex space and laying the groundwork for a theory of gradient flows within it.

The first main result we present is a probabilistic representation of an absolutely continuous curve in the  metric space $(\FP_p,\Ac\Wc_p)$ of filtered processes with finite $p$-th moment, where $\Ac\Wc_p$ denotes the adapted $p$-Wasserstein distance, and by filtered process we understand a tuple
\[ \X=\Big(\Omega^\X,\Fc^\X,\F^\X=(\Fc^\X_t)_{t=1}^T,\P^\X,X=(X_t)_{t=1}^T \Big),\]
where $(\Omega^\X,\Fc^\X,\F^\X=(\Fc^\X_t)_{t=1}^T,\P^\X)$ is a filtered probability space, and $X$ is an $\F^\X$-adapted process with values in a given Polish path space $\Xc=\prod_{t=1}^T \Xc_t$.
Given a curve of filtered processes $[0,1] \ni u \mapsto \X^u \in \FP_p$ that is absolutely continuous in the adapted Wasserstein space, that is, $\X^\bullet\in AC^p(\FP_p)$, we show that there exists a common filtered probability space $(\Omega, \mathcal{F}, \mathbb{F}, \mathbb{P})$ and a flow of $\mathbb{F}$-adapted stochastic processes $Y^u$ defined on this space, such that at each point $u$ along the flow, the corresponding filtered stochastic process can be identified in the adapted Wasserstein space with the original one in the sense that $\Ac\Wc_p(\X^u,\Y^u)=0$. Moreover, the curve $u \mapsto Y^u(\omega)$ is an element of $AC_p(\Xc)$ for $\P$--almost every $\omega \in \Omega$. We also prove the converse: given such a probabilistic representation, the curve $u \mapsto \Y^u$ is necessarily absolutely continuous in the adapted Wasserstein space. In this way, we obtain a characterization of absolutely continuous curves in the adapted Wasserstein framework. This enables a transition of absolute continuity from the level of filtered processes to the level of individual particles on a fixed probability space, and vice versa.

Our approach is highly inspired by the work of \citeauthor{lisini2007characterization}~\cite{lisini2007characterization}, who showed that any absolutely continuous curve of probability measures \((\mu_u)_{u \in [0,1]}\) on a separable and complete metric space \(\mathcal{X}\) can be represented by a Borel probability measure \(\eta\) on the space \(AC([0,1]; \mathcal{X})\) of absolutely continuous curves, in the sense that the \(u\)-marginal of \(\eta\) is \(\mu_u\) for each \(u \in [0,1]\); see~\cite[Theorem 5]{lisini2007characterization}. \citeauthor{lisini2007characterization}'s proof relies on discretizing the curve \((\mu_u)_{u \in [0,1]}\), showing the tightness of the resulting interpolations, and extracting an accumulation point in the space of probability measures. Our method follows a similar line of reasoning, modified to the setting of the adapted Wasserstein distance.

We believe that the probabilistic representation we provide will prove useful in studying gradients in the adapted Wasserstein space. In particular, given a sufficiently regular functional on the adapted Wasserstein space, differentiating along an absolutely continuous curve may serve as a first step toward defining a suitable notion of derivative or gradient for such a functional, and our representation may prove useful in doing so. As this direction requires further investigation, we postpone it to future research.

Another central contribution of this work is the characterization of constant-speed geodesics in the space of filtered processes. In classical optimal transport, the Wasserstein space over a geodesic space inherits a geodesic structure: constant-speed minimizing geodesics represent the most efficient interpolations between probability measures, with mass transported along geodesics in the underlying space, a phenomenon known as displacement interpolation. This geometric perspective is fundamental in the study of evolution equations, particularly gradient flows of functionals such as the entropy, with broad applications in partial differential equations, machine learning, and statistics (see \cite{jordan1998variational,lacker2023independent,lambert2022variational,mokrov2021large,yan2024learning}).
We extend this geometric framework to the adapted setting, again drawing inspiration from the work of \citeauthor*{lisini2007characterization} \cite{lisini2007characterization}, by showing that geodesics in the adapted Wasserstein space can be realized as flows of random processes on a fixed filtered probability space, with the law of these flows concentrated on geodesics in the base space. Furthermore, we reformulate the adapted optimal transport problem as an energy minimization problem, leading to an adapted analogue of the Benamou--Brenier formula. As an application, we demonstrate how these results yield a Skorokhod-type representation for convergent sequences of filtered processes in the adapted weak topology.

The remainder of the paper is organized as follows. In \Cref{sec:prelim}, we introduce the notation and main concepts concerning absolutely continuous curves and filtered processes. In \Cref{sec_ac_curves}, we present the main results on the probabilistic representation of absolutely continuous curves of filtered processes. In \Cref{sec:Benamou-Brenier}, we apply these results to derive a Benamou--Brenier-type formulation of the adapted optimal transport problem. In \Cref{sec:skorokhod}, we apply our results to deduce a Skorokhod-type representation. Finally, \Cref{sec:proof_repres} contains the proof of the probabilistic representation. We collect auxiliary definitions and results in \Cref{sec:appendix}.

\section{Preliminaries and notation} \label{sec:prelim}

This section sets notation and recalls key definitions and results related to absolutely continuous curves, the adapted Wasserstein distance, and filtered processes.

In Euclidean space, absolutely continuous curves are differentiable almost everywhere, with their length given by the integral of the norm of their derivative. In metric spaces, despite the absence of a smooth structure, absolutely continuous curves retain a similar behavior: they admit a metric derivative almost everywhere, and their length is recovered by integrating this derivative; see \citeauthor*{ambrosio2008gradient}~\cite[Chapter 1]{ambrosio2008gradient} and \citeauthor*{burago2001course}~\cite[Section 2.7]{burago2001course}.

Let $(\Yc,d_{\Yc})$ be a separable and complete metric space. We denote by $\Pc(\Yc)$ the set of all probability measure on $\Yc$ endowed with the weak topology and, for $p \in [1,\infty)$, we write $\Pc_p(\Yc)$ for the subset of all measures with finite $p$-th moment. We further denote by $\Wc_p(\cdot,\cdot)$ the $p$-Wasserstein distance, so that 
\[ \Wc_p(\mu,\nu)= \inf_{\pi \in \textnormal{Cpl}(\mu,\nu)} \big(\E^\pi\big[d_{\Yc}^p(X,Y)\big]\big)^{1/p},\quad (\mu,\nu) \in \Pc_p(\Yc)^2,\] where $\textnormal{Cpl}(\mu,\nu)$ is the set of all couplings of $\mu$ and $\nu.$

We write $C(\Yc)$ for the set of all continuous functions $f: [0,1] \longrightarrow \Yc$, and we endow it with the supremum metric
\[ d_{\infty}(f,g)=\sup_{u \in [0,1]} d_{\Yc}(f(u),g(u)).\]
Further, for $p \in [0,\infty)$, we denote by $AC^p(\cY)$ the collection of paths $f : [0,1] \rightarrow \cY$ such that there exists $m \in L^p(\R)$, the space of $p$-integrable, real-valued functions on $[0,1]$, with
\begin{equation} \label{eqn:}
    d_{\cY}(f(u),f(v)) \leq \int_u^v m(r)\d r, \; 0 \leq u \leq v \leq 1,
\end{equation} and we call $AC^p(\cY)$ the space of absolutely continuous curves with finite $p$-energy. 

For a curve $f : [0,1] \rightarrow \cY$, we define the upper metric Dini derivative $|f^\prime| : [0,1] \rightarrow \cY$ by
\begin{equation*}
    |f^\prime|(u) = \limsup_{u \neq v \rightarrow u} \frac{d_\cY(f(u),f(v))}{|u-v|},
\end{equation*}
where we use the convention $0/0 = 0$. 
For all $f \in AC^p(\cY)$, we have
\begin{equation*}
    |f^\prime|(u) = \lim_{u \neq v \rightarrow u} \frac{d_{\cY}(f(u),f(v))}{|u-v|}, \; \textnormal{for a.e.~$u \in [0,1]$,}
\end{equation*} and refer to $|f^\prime|$ as the the metric derivative of $f$.
For $f \in AC^1(\cY)$, we can write
\begin{equation*}
    d_{\cY}(f(u),f(v)) \leq \int_u^v |f^\prime|(r)\d r, \; 0 \leq u \leq v \leq 1,
\end{equation*}
and thus a curve $f \in C(\cY)$ belongs to $AC^p(\cY)$ if and only if $|f^\prime| \in L^p(\R)$; see, e.g., \cite[Theorem 1.1.2]{ambrosio2008gradient}, where we also refer the interested reader for more details.

Finally, for $p \in [0,\infty),$ we define the Sobolev space
\[
    W^{1,p}(\cY) \coloneqq \bigg\{ [f]_\sim \in L^p(\cY) \,\bigg|\, \sup_{0 < h < 1} \int_0^{1-h} \bigg( \frac{d_\cY(f(u+h),f(u))}{h} \bigg)^p \d u < \infty \bigg\},
\]
where $L^p(\cY)$ denotes the collection of equivalence classes $[f]_\sim $ of $p$-integrable functions $f : [0,1] \longrightarrow \cY$ that agree Lebesgue--a.e. on $[0,1]$; see \Cref{sec_lp}. We recall the following result for later reference.

\begin{lemma}[Lemma 1 in \cite{lisini2007characterization}] \label{lem:sobolev_repres}
    Let $p \in (1,\infty)$, and let $\cY$ be a metric space.
    \begin{enumerate}[label=(\roman*)]
    \item If $f \in AC^p(\Yc),$ then $[f]_\sim \in W^{1,p}(\Yc).$
    \item For every $[f]_{\sim} \in W^{1,p}(\Yc)$, there exists $\tilde f \in AC^p(\Yc)$ satisfying $f(u) = \tilde f(u)$ for a.e.\ $u \in [0,1]$. In particular, $\tilde f$ is a continuous representative of the equivalence class $[f]_\sim$. Moreover, the map $W^{1,p}(\Yc) \ni [f]_\sim \longmapsto \tilde f \in C(\Yc)$ is Borel-measurable.
    \end{enumerate}
\end{lemma}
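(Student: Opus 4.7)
\textbf{Part (i)} is a direct computation. For $f \in AC^p(\Yc)$ with $d_\Yc(f(u),f(v)) \leq \int_u^v m(r)\,\d r$ for some $m \in L^p([0,1])$, and any $0 < h < 1$,
\[ \int_0^{1-h}\left(\frac{d_\Yc(f(u+h),f(u))}{h}\right)^p \d u \;\leq\; \int_0^{1-h}\left(\frac{1}{h}\int_u^{u+h} m(r)\,\d r\right)^p \d u. \]
Jensen's inequality applied to the inner average, followed by Fubini's theorem, bounds the right-hand side by $\|m\|_{L^p}^p$ independently of $h$, so $[f]_\sim \in W^{1,p}(\Yc)$.

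For \textbf{part (ii)}, set $g_h(u) := d_\Yc(f(u+h),f(u))/h$ for $u \in [0,1-h]$, extended by zero to $[0,1]$, so that $\{g_h\}_{0<h<1}$ is bounded in $L^p([0,1])$ by hypothesis. Since $p>1$ makes $L^p$ reflexive, Banach--Alaoglu yields a sequence $h_n \downarrow 0$ and a non-negative $m \in L^p([0,1])$ with $g_{h_n} \rightharpoonup m$ weakly. The strategy is then to establish that for a.e.\ $0 \leq u \leq v \leq 1$,
\[ d_\Yc(f(u),f(v)) \;\leq\; \int_u^v m(r)\,\d r. \]
Once this bound holds on a full-measure set, continuity of the right-hand side in $(u,v)$ together with completeness of $\Yc$ forces $f$ to extend uniquely to a continuous map $\tilde f : [0,1] \to \Yc$ satisfying the same inequality, whence $\tilde f \in AC^p(\Yc)$; uniqueness of continuous representatives in a Hausdorff space makes $[f]_\sim \mapsto \tilde f$ well-defined.

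The pointwise bound itself I would obtain by a Riemann sum argument: for $u \leq v$ and $N_n := \lfloor (v-u)/h_n \rfloor$, the triangle inequality gives
\[ d_\Yc(f(u), f(u + N_n h_n)) \;\leq\; h_n \sum_{k=0}^{N_n-1} g_{h_n}(u + k h_n), \]
and double-averaging both sides in small neighborhoods of $(u,v)$ recasts the right-hand side essentially as $\int g_{h_n}\cdot \mathbf{1}_{[u,v]}\,\d r$, whose limit is $\int_u^v m\,\d r$ by weak convergence. Controlling the boundary gap $d_\Yc(f(u+N_n h_n),f(v))$ via Lebesgue's differentiation theorem---applied to the real-valued maps $u \mapsto d_\Yc(f(u),y)$ for $y$ in a countable dense subset of $\Yc$---closes the estimate at a.e.\ pair of Lebesgue points. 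For Borel measurability of $[f]_\sim \mapsto \tilde f$, I would construct Borel-measurable piecewise-interpolation approximants $\tilde f_n$ (for instance, along dyadic grids using locally averaged values) which converge uniformly to the unique continuous representative $\tilde f$; since $C(\Yc)$ is separable and pointwise limits of Borel maps into a separable metric space remain Borel, measurability follows.

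The principal obstacle I anticipate lies in the passage from the Riemann sum bound to the pointwise estimate at a.e.\ $(u,v)$: weak convergence of $g_{h_n}$ to $m$ only controls integrals against fixed test functions, whereas the right-hand side of the triangle-inequality bound depends on $u$ through the grid points and the boundary term $d_\Yc(f(u+N_n h_n),f(v))$ does not vanish pointwise. Resolving this typically requires either averaging in both endpoints before taking the limit, or invoking Mazur's lemma to replace the weakly convergent sequence by a strongly convergent sequence of convex combinations before applying Lebesgue's differentiation theorem.
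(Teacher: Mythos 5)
The paper does not prove this statement; it cites it verbatim from Lisini. Your Part~(i) is correct: Jensen on the inner average plus Tonelli gives the bound $\|m\|_{L^p}^p$. In Part~(ii), however, you correctly set up the weak $L^p$-compactness but then flag---and leave open---the central difficulty: the Riemann-sum estimate evaluates $g_{h_n}$ at grid points moving with $u$, which weak convergence does not control, and the boundary term $d_\Yc(f(u+N_nh_n),f(v))$ does not vanish pointwise. The two remedies you mention are heuristics, not a proof. The clean route (and Lisini's) is to make the scalar reduction the whole argument rather than a patch for the boundary term: fix a countable dense $\{y_k\}_k\subseteq\Yc$, set $\phi_k:=d_\Yc(f(\cdot),y_k)$, and note $|\phi_k(u+h)-\phi_k(u)|\le d_\Yc(f(u+h),f(u))$, so every $\phi_k$ has difference quotients dominated pointwise by $g_h$ and hence lies in the classical $W^{1,p}((0,1))$. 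Since the difference quotients of $\phi_k$ converge strongly in $L^p_{\rm loc}$ to $\phi_k'$ while $g_{h_n}\rightharpoonup m$, one gets $|\phi_k'|\le m$ a.e.\ for every $k$, so each $\phi_k$ admits an absolutely continuous representative $\tilde\phi_k$ with $|\tilde\phi_k(u)-\tilde\phi_k(v)|\le\int_u^v m$ for all $u\le v$. On the full-measure set $E$ where $\phi_k=\tilde\phi_k$ for all $k$ simultaneously, the identity $d_\Yc(a,b)=\sup_k|d_\Yc(a,y_k)-d_\Yc(b,y_k)|$ yields $d_\Yc(f(u),f(v))\le\int_u^v m$ for $u,v\in E$; thus $f|_E$ is uniformly continuous and, by completeness of $\Yc$, extends uniquely to $\tilde f\in AC^p(\Yc)$. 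No Riemann sums, grids, or endpoint corrections appear.

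Your measurability argument also does not go through as written: ``locally averaged values'' and ``piecewise interpolation'' are undefined in a general metric space, which has no linear or geodesic structure to interpolate in. Instead, for fixed $u\in[0,1]$ and $k\in\N$, the map $[f]_\sim\mapsto\frac1h\int_u^{u+h}d_\Yc(f(r),y_k)\,\d r$ is Lipschitz on $L^p(\Yc)$, and its limit as $h\downarrow 0$ along $h=1/n$ is $\tilde\phi_k(u)=d_\Yc(\tilde f(u),y_k)$; hence $[f]_\sim\mapsto d_\Yc(\tilde f(u),y_k)$ is Borel as a pointwise limit of continuous maps. Composing with the inverse of the Kuratowski-type embedding $y\mapsto(d_\Yc(y,y_k))_k$, which is a homeomorphism onto its image, gives Borel measurability of $[f]_\sim\mapsto\tilde f(u)$; finally, $C(\Yc)$ is separable (because $\Yc$ is), so its Borel $\sigma$-algebra is generated by countably many evaluation maps, and $[f]_\sim\mapsto\tilde f$ is Borel.
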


\subsection{Filtered processes} \label{sec:FP}

We fix separable and complete metric (thus Polish) spaces $(\cX_1,d_{\cX_1}), (\cX_2,d_{\cX_2}), \ldots,$\linebreak $(\cX_T,d_{\cX_T})$, and let $\cX$ be the metric space defined by $\cX = \cX_1 \times \cdots \times \cX_T$, endowed with $d_{\cX} = d_{\cX_1} \oplus \cdots \oplus d_{\cX_T}$, which will play the role of the path space. For $p \in [1,\infty)$,
we further define the $p$-metric on $\Xc$ by $d_{\cX,p} = (d^p_{\cX_1} \oplus \cdots \oplus d^p_{\cX_T})^{1/p}$.

\begin{definition}    A filtered process in $\cX$ is a tuple \[\X = \Big(\Omega^\X,\cF^\X,\P^\X,\F^\X = (\cF^\X_t)_{t = 1}^T,X = (X_t)_{t = 1}^T\Big)\] consisting of a filtered probability space $(\Omega^\X,\cF^\X,\P^\X,(\cF^\X_t)_{t = 1}^T)$ and an $\F^\X$-adapted stochastic process $X = (X_t)_{t = 1}^T$ with paths in $\cX$. The class of all filtered processes in $\cX$ is denoted by $\cF\cP(\cX)$ (or $\cF\cP$). For $p \in [1,\infty)$, we denote by $\cF\cP_p(\cX)$ (or $\cF\cP_p$) the class of all filtered processes $\X$ in $\cX$ satisfying $\E^{\P^\smalltext{\X}}[d_{\cX,p}(X,x_0)^p] < \infty$ for some (and then all) $x_0 \in \cX$.
\end{definition}

We always adjoin the trivial $\sigma$-field $\cF^\X_0 = \{\emptyset, \Omega^\X\}$ to $\F^\X$ whenever necessary.

\begin{remark}
    We note that the class of filtered processes does not constitute a set in the usual sense. If it were a set, one could form the product of all filtered processes in $\mathcal{FP}$, thereby constructing a new filtered process distinct from all elements of $\mathcal{FP}$, yet still belonging to $\mathcal{FP}$. However, this is not possible; see \textnormal{\Cref{sec::canonical_filtered_process}}.
\end{remark}

To simplify the notation, for filtered processes $\X$ and $\Y$, we denote
\[
    \cF^{\X,\Y} \coloneqq \cF^\X\otimes\cF^\Y \; \textnormal{and} \; \cF^{\X,\Y}_{s,t} \coloneqq \cF^\X_s \otimes \cF^\Y_t, \; s,t \in \{0,\ldots, T\}.
\]

\begin{definition}
    A coupling between $\X$ and $\Y$ is a probability measure $\pi$ on $(\Omega^\X \times \Omega^\Y, \cF^\X \otimes \cF^\Y)$ for which its marginal on $(\Omega^\X, \cF^\X)$ is $\P^\X$ and that on $(\Omega^\Y, \cF^\Y)$ is $\P^\Y$. The collection of couplings between $\X$ and $\Y$ is denoted by $\textnormal{Cpl}(\X,\Y)$. A coupling $\pi \in {\rm Cpl}(\X,\Y)$ is called:
\begin{enumerate}
\item[$(i)$] causal, if $\cF^{\X,\Y}_{T,0}$ and $\cF^{\X,\Y}_{0,t}$ are conditionally $\pi$-independent given $\cF^{\X,\Y}_{t,0}$ for every $t \in \{1,\ldots,T-1\}$;
\item[$(ii)$] anticausal, if $\cF^{\X,\Y}_{0,T}$ and $\cF^{\X,\Y}_{t,0}$ are conditionally $\pi$-independent given $\cF^{\X,\Y}_{0,t}$ for every $t \in \{1,\ldots,T-1\}$;
\item[$(iii)$] bicausal, if it is causal and anticausal.
\end{enumerate}
The collections of causal, anticausal and bicausal couplings between $\X$ and $\Y$ are denoted by $\textnormal{Cpl}_\textnormal{c}(\X,\Y)$, $\textnormal{Cpl}_\textnormal{ac}(\X,\Y)$ and $\textnormal{Cpl}_\textnormal{bc}(\X,\Y)$, respectively.

     Let further $\X^i=(\Omega^i,\Fc^i, \P^i,\F^i,X^i),$ $i \in \{1,\ldots,n\}$ be filtered processes. We say that $\pi \in \Pc(\prod_{i=1}^n \Omega^i)$ is a multicausal coupling of $\X^1,\ldots\X^n$ if its marginal on $(\Omega^i, \cF^i)$ is $\P^i$ for every $i \in \{1,\ldots,n\},$ and 
     \[\Fc_0^1 \otimes \cdots \otimes \Fc_0^{i-1} \otimes \Fc_T^{i} \otimes \Fc_0^{i+1} \otimes \cdots \otimes\Fc_0^n\quad {\rm and} \quad \Fc_t^1 \otimes \cdots \otimes \Fc_t^n\]
     are conditionally $\pi$-independent given $\Fc_0^1 \otimes \cdots \otimes \Fc_0^{i-1} \otimes \Fc_t^{i} \otimes \Fc_0^{i+1}\otimes \cdots \otimes \Fc_0^n$ for every $i \in \{1,\ldots,n\}$ and $t \in \{1,\ldots,T-1\}$.
\end{definition}

The adapted Wasserstein distance of order $p \in [1,\infty)$ between two filtered processes $\X$ and $\Y$ in $\cX$ is then defined as
\[
\cA\cW_p(\X,\Y) \coloneqq \inf_{\pi \in \textnormal{Cpl}_{\textnormal{bc}}(\X,\Y)} \big(\E^\pi\big[d_{\cX,p}(X,Y)^p\big]\big)^{1/p},\]
where \[ d_{\cX,p}(X,Y)(\omega^\X,\omega^\Y) = \bigg(\sum_{t = 1}^T d_{\cX_t}\big(X_t(\omega^\X),Y_t(\omega^\Y)\big)^p\bigg)^{1/p},\; (\omega^\X,\omega^\Y) \in \Omega^\X \times \Omega^\Y. \]
We recall from \cite[Theorem 3.1]{BaBePa21} that $\cA\cW_p$ behaves like a pseudo-metric \footnote{We say that $d$ is a pseudo-metric on $\cY$ if it satisfies all the conditions of being a metric except that $d_{\mathcal{Y}}(x, y) = 0$ might not imply $x = y.$ We point out that the terminology is inconsistent across related works, as some authors define a pseudo-metric as a function that satisfies all the properties of a metric except that they allow $d_\cY(x,y) = \infty$.} on the class $\cF\cP_p$. Defining the equivalence $\sim_{\FP}$ on $\Fc\Pc^2_p$ by \[ \X \sim_{\FP} \Y \iff  \inf_{\pi \in \textnormal{Cpl}_{\textnormal{bc}}(\X,\Y)} \E^\pi\big[d_{\cX,p}(X,Y) \wedge 1\big]=0 \] allows us to identify elements satisfying $\Ac\Wc_p(\X,\Y)=0$ and introduce the space of equivalence classes $\FP_p\coloneqq \Fc\Pc_p \big\vert_{\sim_{\FP}}$. Note that $(\FP_p,\Ac\Wc_p)$ is a metric space. From now on, we do not distinguish between filtered processes and their equivalence classes. This causes no ambiguity, as all properties studied in this paper are invariant under equivalence.

We recall the following Prokhorov-like compactness theorem, which will be useful in what follows.

\begin{lemma}[Theorem 5.1 in \cite{BaBePa21}]\label{lem::relatively_compact}
    Let $\cA\subseteq \FP_p$. Then  $\cA$ is precompact in $\FP_p$ if and only if $\{\textnormal{Law}_{\P^\smalltext{\X}}(X)\,\vert\, \X \in \cA\}$ is precompact in $\cP_p(\cX)$.
\end{lemma}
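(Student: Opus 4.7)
The plan is to prove the two implications separately. The forward direction is straightforward: every bicausal coupling is in particular a coupling, so for all $\X, \Y \in \FP_p$,
\[\Wc_p\big(\textnormal{Law}_{\P^\X}(X),\textnormal{Law}_{\P^\Y}(Y)\big) \leq \Ac\Wc_p(\X,\Y),\]
which means the projection $\pi : (\FP_p, \Ac\Wc_p) \longrightarrow (\Pc_p(\cX), \Wc_p)$, $\X \longmapsto \textnormal{Law}_{\P^\X}(X)$, is $1$-Lipschitz. Continuous maps preserve precompactness, so precompactness of $\cA \subseteq \FP_p$ entails that of $\pi(\cA)$.

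For the converse---the substantive direction---the plan is to lift the problem to an ambient Polish space in which the adapted Wasserstein distance reduces to a standard Wasserstein distance. Concretely, I would invoke the canonical filtered-process representation (the ``prediction'' or ``information process''): recursively define a backward tower of Polish spaces
\[\hat\cX_T \coloneqq \cX_T, \qquad \hat\cX_t \coloneqq \cX_t \times \Pc_p(\hat\cX_{t+1}),\quad t = T-1,\ldots,1,\]
and associate with each $\X \in \FP_p$ a process $\hat X$ in $\hat\cX \coloneqq \hat\cX_1 \times \cdots \times \hat\cX_T$ obtained by augmenting $X_t$ with the $\P^\X$-conditional law of the lifted future given $\cF^\X_t$. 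The crucial structural fact, established in \cite{BaBePa21}, is that $\X \longmapsto \textnormal{Law}_{\P^\X}(\hat X)$ descends to a well-defined isometric embedding of $(\FP_p,\Ac\Wc_p)$ into $(\Pc_p(\hat\cX),\Wc_p)$.

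Once this embedding is in place, the claim reduces to showing that precompactness of $\{\textnormal{Law}_{\P^\X}(X) : \X \in \cA\}$ in $\Pc_p(\cX)$ propagates to precompactness of $\{\textnormal{Law}_{\P^\X}(\hat X) : \X \in \cA\}$ in $\Pc_p(\hat\cX)$. I expect this to be the main obstacle, as the lifted coordinates encode conditional distributions whose tightness and $p$-th moment control do not follow directly from those of the terminal marginals. The argument should proceed by backward induction on $t$: assuming uniform tightness together with a uniform $p$-th moment bound on the already-lifted future coordinates, one exploits that $\Pc_p$ of a uniformly tight, $p$-uniformly integrable family is itself tight in $\Pc_p$, combined with Jensen's inequality applied to the conditional expectation defining the lifted coordinate at time $t$, to transfer both controls one step backward through the tower. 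Finally, Prokhorov's theorem in $\Pc_p(\hat\cX)$ produces a $\Wc_p$-convergent subsequence of the lifted laws, which via the isometric embedding yields an $\Ac\Wc_p$-convergent subsequence in $\FP_p$, completing the proof.
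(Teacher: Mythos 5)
The paper does not prove this lemma but imports it verbatim as Theorem 5.1 of \cite{BaBePa21}, and your sketch tracks exactly the argument of that reference: the easy direction via $1$-Lipschitzness of the projection $\X \mapsto \textnormal{Law}_{\P^{\X}}(X)$, and the substantive direction via the isometric embedding through the information/prediction process into $\Pc_p(\hat\cX)$, followed by a backward-inductive propagation of tightness and uniform $p$-integrability up the tower using Markov's inequality and the tower property. Your outline correctly identifies the propagation step as the crux; making it rigorous requires the standard characterization of $\Wc_p$-precompactness (tightness plus uniform integrability of $p$-th moments) applied at each level of the nested $\Pc_p$ spaces, which is precisely what \cite{BaBePa21} does.
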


\section{Representation of curves in \texorpdfstring{$AC^p(\FP_p)$}.} \label{sec_ac_curves}

Since we will be working with absolutely continuous curves of filtered processes, we first briefly discuss these concepts in the specific setting of the adapted Wasserstein distance.

Let $(\X^u)_{u \in [0,1]}$ be a collection of elements in $\FP_p$. Then $(\X^u)_{u \in [0,1]}$ is an absolutely continuous curve in $\FP_p$ with finite $p$-th energy, denoted by $(\X^u)_{u \in [0,1]} \in AC^p(\FP_p),$ if and only if there exists $m \in L^p(\R)$ satisfying
\[ \Ac\Wc_p(\X^u,\X^v) \leq \int_u^v m(r) \mathrm d r,\quad 0 \leq u \leq v \leq 1.\]
For $p \in [1,\infty)$, the $p$-th upper metric Dini derivative $|\X^\prime|_p$ of a curve $[0,1] \ni u \mapsto \X^u \in \FP_p$ is defined as
\begin{equation*}
    |\X^\prime|_p(u) \coloneqq \limsup_{u \neq v \rightarrow u} \frac{\Ac\Wc_p(\X^u,\X^{v})}{|u-v|},\quad u \in [0,1].
\end{equation*}

Before stating the main theorems, we informally outline the main ideas behind them.
It is relatively straightforward to verify that, informally speaking, if we are given a filtered probability space $(\Omega, \Fc,\F, \P)$ supporting $\F$-adapted processes $\Y^u$, $u \in [0,1]$, such that the map $u \mapsto Y^u(\omega)$ is an absolutely continuous curve in $\Xc$ for $\P$--almost every $\omega \in \Omega$, then
\begin{equation} \label{eqn:probabil_repr_Yu}
u \longmapsto \Y^u \coloneqq \big( \Omega, \Fc, \F, \P, Y^u \big)
\end{equation}
defines an absolutely continuous curve in $\FP$.

In this section, we are primarily interested in the other implication: Given an absolutely continuous curve in $\FP$, is there a probabilistic representation of this curve akin to \eqref{eqn:probabil_repr_Yu}? Indeed, this is not a trivial question as absolutely continuous curves $(\X^u)_{u \in [0,1]}$ in $(\FP_p,\cA\cW_p)$ take values in equivalence classes of filtered processes. Thus, when choosing representatives of these equivalence classes, the curve may pass through different filtered probability spaces at different points. Consequently, one would have to deal with a continuum of potentially different base spaces. In this section, we show that it is indeed possible to construct a single filtered probability space $(\Omega, \cF, (\cF_t)_{t = 1}^T, \P)$ and a family $(Y^u)_{u \in [0,1]}$ of stochastic processes $(Y^u_t)_{t = 1}^T$ defined on it, such that at each point $u \in [0,1]$, $\Y^u \coloneqq (\Omega, \cF, (\cF_t)_{t = 1}^T, \P, (Y^u_t)_{t = 1}^T)$ satisfies $\Y^u \sim_{\FP}\X^u $ and $u \mapsto Y^u(\omega)$ is an absolutely continuous curve for $\P$--almost every $\omega.$

The proof we employ relies on discretization and piecewise linear approximation of the absolutely continuous curve, and a subsequent passage to the limit. For illustration, we briefly outline the key steps of the proof.
\begin{enumerate}[label=(\roman*)]
    \item Given a partition $0=u_0< u_1 <\cdots<u_{n-1}<u_n=1,$ we approximate the curve $u \mapsto \X^u$ by a piecewise linear curve as in \Cref{fig:discretization}, where we interpolate between two neighbouring elements $\X^{u_i}$ and $\X^{u_{i+1}}$ by the displacement interpolation in the adapted Wasserstein space. For each pair of neighbouring points $\X^{u_i}$ and $\X^{u_{i+1}}$, we thus select an optimal coupling $\pi^i \in \cplbc(\X^{u_i}, \X^{u_{i+1}})$.
    \item  These bicausal couplings $\pi^i$ can then be concatenated into a multicausal coupling $\gamma \in \cplmc(\X^0, \X^{u_1}, \ldots, \X^1)$ using Lemma A.5 from \cite{AcKrPa23a}.
    \item By means of $\gamma,$ we can find the sought representation for the vector of processes $(\X^0, \X^{u_1}, \ldots,\X^1)$ and thus also for the piecewise linear approximation of the curve.
    \item \label{itm:cluser_point} Then we choose a sequence of refining partitions and, using a suitable tightness criterion, show that we can pass to the limit, thereby obtaining a candidate representation of the curve.
    \item Finally, we verify that the resulting cluster point satisfies all the required properties.
\end{enumerate}

\begin{figure}[H]
    \centering
\begin{tikzpicture}[scale=10]
\draw[-] (0,0) -- (1,0);
\newcommand{\gammafun}[1]{0.05 + 2*(#1) - (2*(#1))^2 + 2*(#1)^3}
\foreach \x in {0, 0.25, 0.5, 0.75, 1.0} {
    \pgfmathsetmacro{\y}{0.05 + 2*(\x) - (2*(\x))^2 + 2*(\x)^3}
    \draw[gray, dashed] (\x,0) -- (\x,\y);
    \filldraw[black] (\x,\y) circle (0.2pt);
}
\draw[very thick, blue, domain=0:1, samples=100]
    plot (\x,{0.05 + 2*(\x) - (2*(\x))^2 + 2*(\x)^3})
    node[right] {$\X^u$};
\draw[ultra thick, red, dashed]
    plot coordinates {
        (0,{0.05 + 2*0 - (2*0)^2 + 2*0^3})
        (0.25,{0.05 + 2*0.25 - (2*0.25)^2 + 2*0.25^3})
        (0.5,{0.05 + 2*0.5 - (2*0.5)^2 + 2*0.5^3})
        (0.75,{0.05 + 2*0.75 - (2*0.75)^2 + 2*0.75^3})
        (1.0,{0.05 + 2*1.0 - (2*1.0)^2 + 2*1.0^3})
    };
\node[below] at (0,0) {$u_0=0$};
\node[below] at (1,0) {$u_4=1$};
\node[below] at (0.25,0) {$u_1$};
\node[below] at (0.5,0) {$u_2$};
\node[below] at (0.75,0) {$u_3$};

\node[below] at (0.125,0.18) {{\color{red} $\pi^0$}};
\node[below] at (0.375,0.31) {{\color{red} $\pi^1$}};
\node[below] at (0.625,0.21) {{\color{red} $\pi^2$}};
\node[below] at (0.875,0.17) {{\color{red} $\pi^3$}};
\end{tikzpicture}
\caption{Piecewise linear approximation of the curve $u \mapsto \mathbb{X}^u$.}
\label{fig:discretization}
\end{figure}

\begin{remark} \label{rem:accurves}
    Our results are similar in spirit to Theorems 4 and 5 in \cite{lisini2007characterization} (see also \cite[Theorem 8.2.1]{ambrosio2008gradient}) in that, given an absolutely continuous curve $(\X^u)_{u \in [0,1]},$ they enable us to couple the processes  $\X^u$ in a way that ensures that ``every particle moves absolutely continuously''. This allows us to transition from the absolute continuity of distributions (a macroscopic perspective) to the absolute continuity of individual particles (a microscopic perspective). However, we would like to emphasize that our approach differs in several key ways from a straightforward application of the results in \cite{lisini2007characterization}. Indeed, absolute continuity of the curve $(\X^u)_{u \in [0,1]}$ in $(\FP_p,\cA\cW_p)$ implies absolute continuity of the curve $u \mapsto \overline \Q^u\coloneqq{\rm Law}_{\P^u}({\rm ip}(\X^u))$ in $\Pc_p(\Zc)$ by \cite[Theorem 3.10]{BaBePa21}, see \Cref{sec::canonical_filtered_process} for the relevant definitions. It then follows, by application of \cite[Theorem 5]{lisini2007characterization}, that there exists $\eta \in \Pc(C(\Zc))$ concentrated on $AC^p(\Zc)$ such that $(f \mapsto f(u))_{\#}\eta=\overline \Q^u$. We thus obtain a ``coupling'' of $\X^u,$ $u \in [0,1]$ by means of $\eta.$ This construction is, however, different to the result we present here, as we have a flow of measures on the space $\Zc$ instead of working with a fixed filtered probability space with a single measure and a ``flow of random variables''. In the context of stochastic analysis, we believe that the latter is the natural representation and is more useful for applications. Moreover, we avoid using the canonical space $\Zc$ in our construction, which may be tedious and harder to work with.
\end{remark}

The following result shows that if a family of filtered processes evolves on the same filtered probability space, and if the law of this family of stochastic processes is concentrated on absolutely continuous trajectories in $\cX$, then the associated curve of filtered processes is absolutely continuous in the adapted Wasserstein distance.

\begin{theorem} \label{prop:AC1}
    Let $p \in [1,\infty),$ $(\Omega,\cF,(\cF_t)_{t=1}^T,\P)$ be a filtered probability space, and $Y^u = (Y^u_t)_{t=1}^T : \Omega \rightarrow \cX$, $u \in [0,1],$ a family of  $(\cF_t)_{t=1}^T$-adapted processes. Suppose that $\eta\coloneqq \law_{\P}((Y^u)_{u \in [0,1]}) \in \cP(C(\Xc))$ 
    is concentrated on $AC^p(\Xc)$ for some $p \in [1,\infty)$, that
\begin{align} 
    \Y^0 \coloneqq (\Omega,\cF,(\cF_t)_{t=1}^T,\P,(Y^0_t)_{t = 1}^T) &\in \FP_p, \quad \text{and}\nonumber \\ \int_{C([0,T],\Xc)}\int_0^1 |f^\prime|^p(s)\mathrm{d}s\,\eta(\mathrm{d}f)&<\infty. \label{eqn:finite_mom} 
\end{align} 
Then, the curve $u \mapsto \Y^u \coloneqq (\Omega,\cF,(\cF_t)_{t=1}^T,\P,(Y^u_t)_{t= 1}^T)$ is in $AC^p(\FP_p)$ and its metric derivative satisfies
\begin{equation}\label{eqn:lisini_derivative_FP_leq}
    |\Y^\prime|^p(u) 
    \leq \int_{C(\Xc)} |f^\prime |^p(u)\eta(\mathrm{d}f), \; \text{for {\rm a.e.}~$u \in [0,1]$.}
\end{equation}
\end{theorem}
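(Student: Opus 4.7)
The plan is to bound $\cA\cW_p(\Y^u, \Y^v)$ directly using the diagonal coupling on the common filtered space $\Omega$, reduce the estimate to a pathwise $AC^p$-inequality, and then apply Minkowski's integral inequality to extract an explicit $L^p$-modulus. Since all the filtered processes $\Y^u$ share the same filtered probability space $(\Omega, \cF, \F, \P)$, the pushforward $\pi := (\mathrm{id}, \mathrm{id})_\# \P$ on $\Omega \times \Omega$ belongs to $\cplbc(\Y^u, \Y^v)$ for every $u, v \in [0,1]$: under $\pi$ the two coordinates coincide $\P$-a.s., so $\cF^\X_T \otimes \{\emptyset, \Omega\}$ and $\{\emptyset, \Omega\} \otimes \cF^\Y_t$ are trivially conditionally independent given $\cF^\X_t \otimes \{\emptyset, \Omega\}$ (and symmetrically for anticausality). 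Using $\pi$ as a test coupling yields
\[
    \cA\cW_p(\Y^u, \Y^v)^p \leq \E^\P\big[d_{\cX,p}(Y^u, Y^v)^p\big], \qquad u, v \in [0,1].
\]

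Next, I set $f_\omega(u) := Y^u(\omega)$. By hypothesis, for $\P$-a.e.\ $\omega$ the curve $f_\omega$ lies in $AC^p(\cX)$, so $d_{\cX}(f_\omega(u), f_\omega(v)) \leq \int_u^v |f_\omega'|(r)\,\d r$ for $0 \leq u \leq v \leq 1$. Since $d_{\cX,p} \leq d_{\cX}$ pointwise on $\cX^2$ (as $(\sum_t a_t^p)^{1/p} \leq \sum_t a_t$ for $a_t \geq 0$ and $p \geq 1$), Minkowski's integral inequality gives
\[
    \cA\cW_p(\Y^u, \Y^v) \leq \bigg(\E^\P\bigg[\bigg(\int_u^v |f_\omega'|(r)\,\d r\bigg)^p\bigg]\bigg)^{1/p} \leq \int_u^v m(r)\, \d r,
\]
where $m(r) := \big(\E^\P[|f_\omega'|(r)^p]\big)^{1/p} = \big(\int_{C(\cX)} |g'|^p(r)\, \eta(\d g)\big)^{1/p}$. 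The integrability assumption \eqref{eqn:finite_mom} yields $\|m\|_{L^p([0,1])}^p = \int_0^1 \int |g'|^p(r)\, \eta(\d g)\, \d r < \infty$, so $m \in L^p(\R)$. Combined with $\Y^0 \in \FP_p$ and the triangle inequality for $d_{\cX,p}$, this also ensures $\Y^u \in \FP_p$ for every $u$, so $u \mapsto \Y^u$ indeed belongs to $AC^p(\FP_p)$.

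Finally, Lebesgue's differentiation theorem yields, for a.e.\ $u \in [0,1]$,
\[
    |\Y'|_p(u) = \limsup_{v \to u}\frac{\cA\cW_p(\Y^u, \Y^v)}{|u-v|} \leq \lim_{v \to u}\frac{1}{|v-u|}\bigg|\int_u^v m(r)\,\d r\bigg| = m(u),
\]
and raising to the $p$-th power gives $|\Y'|^p(u) \leq m(u)^p = \int_{C(\cX)} |g'|^p(u)\,\eta(\d g)$, as claimed. The one technical point that requires some care is the joint Borel measurability of $(\omega, r) \mapsto |f_\omega'|(r)$ needed for the Fubini/Minkowski step; this follows from the measurability of $\omega \mapsto f_\omega$ into the Polish space $C(\cX)$ together with the Borel character of the metric derivative (written as a $\limsup$ over rationals of a continuous functional of $f$), so I expect no deeper obstacle beyond standard measure-theoretic bookkeeping.
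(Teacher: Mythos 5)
Your argument is correct and follows the same core strategy as the paper's proof: use the diagonal coupling $(\mathrm{id},\mathrm{id})_\#\P \in \textnormal{Cpl}_{\textnormal{bc}}(\Y^u,\Y^v)$ to bound $\cA\cW_p^p(\Y^u,\Y^v)$ by $\E^\P[d^p_{\cX,p}(Y^u,Y^v)]$, then apply the pathwise $AC^p$ bound $d_\cX(f(u),f(v)) \le \int_u^v |f'|(r)\,\d r$, and conclude via Lebesgue differentiation. The one place you diverge is in extracting the $L^p$-modulus: the paper uses H\"older's inequality to bound $\big(\int_u^v |f'|\big)^p$ by $|u-v|^{p-1}\int_u^v |f'|^p$, then Fubini and a final maximization step, and ultimately recovers $AC^p$ membership from the metric-derivative estimate via the characterization $|f'|\in L^p \Leftrightarrow f\in AC^p$; you instead apply Minkowski's integral inequality to get the modulus $m(r) = \big(\int_{C(\cX)} |f'|^p(r)\,\eta(\d f)\big)^{1/p}$ directly, and $m\in L^p$ is then immediate from \eqref{eqn:finite_mom}. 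Your route is marginally cleaner and streamlines the final step of the argument without changing its substance. Your closing remark about joint Borel measurability of $(\omega,r)\mapsto |f'_\omega|(r)$ is a reasonable (and necessary) bit of bookkeeping that the paper elides; and your observation that $\Y^u\in\FP_p$ for all $u$ (not just $u=0$) follows from the triangle inequality and the modulus bound is also a point the paper leaves implicit.
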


\begin{proof} Since $\eta$ is concentrated on $AC^p(\Xc)$, the curve $u \mapsto \law_{\P}(Y^u)$ lies in \linebreak $AC^p(\Pc_p(\Xc))$ by a direct application of \cite[Theorem 4]{lisini2007characterization}. Consequently, by the same theorem, we have that 
\[
\eta \Big[\big\{f \in C(\Xc) \,\big|\, |f^\prime|(u) \text{ exists for a.e.\ }u \in [0,T]\big\} \Big]=1.
\] 

It is straightforward to check that  $({\rm id},{\rm id})_{\#}\P \in \cplbc(\Y^s,\Y^u)$  for
$0\leq s\leq u\leq 1$, and we thus obtain, by H\"older's inequality and Fubini's theorem, that
\begin{align*}
    \Ac\Wc_p^p(\Y^s,\Y^u) &\leq \int d^p_{\cX,p}(Y^s(\omega),Y^u(\omega)) \P(\mathrm{d}\omega)\\
    &\leq \int_{C(\Xc)} \Big( \int_s^u |f^\prime |(r)\mathrm{d}r \Big)^p \eta(\mathrm{d}f) \\
    &\leq |s-u|^{p-1} \int_{C(\Xc)}  \int_s^u |f^\prime |^p(r)\mathrm{d}r\, \eta(\mathrm{d}f)\\
    &=|s-u|^{p-1}  \int_s^u \int_{C(\Xc)} |f^\prime |^p(r)\eta(\mathrm{d}f)\mathrm{d}r \\
    &= \bigg(\int_s^u \d r\bigg)^{p-1}  \int_s^u \int_{C(\Xc)} |f^\prime |^p(r)\eta(\mathrm{d}f)\mathrm{d}r \\
    &\leq  \bigg(\int_s^u \int_{C(\Xc)} \max\{|f^\prime |^p(r), 1\}\eta(\mathrm{d}f)\mathrm{d}r\bigg)^p .
\end{align*} Because \eqref{eqn:finite_mom} holds, this shows absolute continuity of the curve $u \mapsto \Y^u$ in $\Fc\Pc_p.$ Moreover, the same chain of inequalities then also implies
\begin{align*}
    |\Y^\prime|^p(u) 
    &= \lim_{u \neq s \rightarrow u}\frac{\Ac\Wc_p^p(\Y^s,\Y^u)}{|s-u|^{p}} \leq \liminf_{u \neq s \rightarrow u}\frac{1}{|s-u|}  \int_s^u \int_{C(\Xc)} |f^\prime |^p(r)\eta(\mathrm{d}f)\mathrm{d}r\\ 
    & = \int_{C(\Xc)} |f^\prime |^p(u)\eta(\mathrm{d}f), \; \text{for {\rm a.e.}~$u \in [0,1]$},
\end{align*}
where the last equality follows from Lebesgue's differentiation theorem. This completes the proof.
\end{proof}

The main theorem of this section, which proves the converse implication, follows. We remark that its proof closely follows the proof of \cite[Theorem 5]{lisini2007characterization}, however, with significant modifications, as we aim to obtain a different representation than that described in \Cref{rem:accurves}.

\begin{theorem} \label{prop:AC_curve_in_FP}
    Let $p \in (1,\infty)$, and let $[0,1] \ni u \mapsto \X^u \in \FP_p$ be an absolutely continuous curve with finite $p$-energy.
    Then there exists a filtered probability space $(\Omega,\cF,(\cF_t)_{t=1}^T,\P)$ and a family of process $Y^u = (Y^u_t)_{t=1}^T : \Omega \rightarrow \cX$, $u \in [0,1]$, such that the filtered processes
    \begin{equation}
        \Y^u = (\Omega,\cF,(\cF_t)_{t=1}^T,\P, (Y^u_t)_{t=1}^T), \; u \in [0,1],
    \end{equation}
    satisfy $\cA\cW_p(\Y^u,\X^u) = 0$ for every $u \in [0,1]$, the paths $u \mapsto Y^u(\omega)$ are continuous for every $\omega \in \Omega$, and $\eta\coloneqq \law_{\P} ((Y^u)_{u \in [0,1]}) \in \cP(C(\cX))$  is concentrated on $AC^p(\Xc).$ Moreover, 
    \begin{equation}\label{eqn:lisini_derivative_FP}
        |\Y^\prime|^p(u) = \int_{C(\cX)} |f^\prime|^p(u)\eta(\d f), \; \text{for {\rm a.e.}~$u \in [0,1]$.}
    \end{equation}
\end{theorem}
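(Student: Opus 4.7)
The plan is to execute the five-step outline preceding the theorem, adapting Lisini's proof of \cite[Theorem 5]{lisini2007characterization} to the filtered setting by systematically replacing the classical gluing argument with the multicausal gluing of \cite[Lemma A.5]{AcKrPa23a}. First I would fix the dyadic partitions $\mathcal{P}_n = \{k/2^n : 0 \leq k \leq 2^n\}$ and, for each $n$ and each $k\in\{1,\dots,2^n\}$, pick an optimal $\pi^{n,k} \in \cplbc(\X^{(k-1)/2^n}, \X^{k/2^n})$ (which exists by the compactness statement behind \Cref{lem::relatively_compact}). Inductive application of the multicausal gluing lemma concatenates them into $\gamma_n \in \cplmc(\X^0, \X^{1/2^n}, \ldots, \X^1)$ with the $\pi^{n,k}$ as its consecutive bivariate marginals. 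This yields a common filtered probability space $(\Omega_n,\cF_n,(\cF_n^t)_{t=1}^T,\gamma_n)$ supporting adapted copies $\tilde X^{n,k}$ of each $X^{k/2^n}$.

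Next I would build a continuous path-valued random variable $F^n\colon \Omega_n \to C(\Xc)$ that interpolates between the $\tilde X^{n,k}$ at the nodes $k/2^n$. Since $\Xc$ is merely Polish, I would use a Kuratowski-type isometric embedding $\Xc \hookrightarrow V$ into a separable Banach space and linearly interpolate there, so that $F^n(\omega) \in C([0,1];V)$ and $|F^n|'(u)(\omega) = 2^n\, d_{\cX,p}\bigl(\tilde X^{n,k-1}(\omega),\tilde X^{n,k}(\omega)\bigr)$ for $u \in ((k-1)/2^n, k/2^n)$. Optimality of $\pi^{n,k}$ combined with the absolute continuity of $u \mapsto \X^u$ with density $m \in L^p$ yields, via a Jensen/H\"older computation,
\begin{equation*}
\E^{\gamma_n}\!\left[\int_0^1 |F^n|'(u)^p\,\d u\right] \leq \int_0^1 m(u)^p\,\d u,
\end{equation*}
which is the discrete counterpart of \eqref{eqn:lisini_derivative_FP}. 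Together with the tightness of the laws of $\tilde X^{n,k}$ in $\Pc_p(\Xc)$ (from \Cref{lem::relatively_compact}), an Ascoli-type argument gives tightness of $\eta_n := \law_{\gamma_n}(F^n)$ in $\Pc(C([0,1];V))$. I would extract a weakly convergent subsequence $\eta_{n_k} \to \eta$, use lower semicontinuity of the $p$-energy and density of the dyadics together with closedness of $\Xc$ in $V$ to conclude that $\eta$ is concentrated on $AC^p(\Xc)$, and then invoke the Skorokhod representation to realise $\eta$ on a single probability space $(\Omega,\cF,\P)$ as a path-valued random variable, thereby defining $Y^u(\omega)$.

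The final step is to equip $\Omega$ with a filtration $(\cF_t)_{t=1}^T$ rendering every $Y^u$ adapted, and to verify $\cA\cW_p(\Y^u,\X^u)=0$. For dyadic $u$ this is a direct consequence of the construction, since the limiting $(2^n+1)$-dimensional marginals of $\eta_{n_k}$ remain bicausal with the correct one-dimensional laws; for general $u$, one passes from dyadic approximants via continuity of $u \mapsto \X^u$ in $\FP_p$ and of the constructed curve $u \mapsto \Y^u$ (the latter furnished by \Cref{prop:AC1}). The metric derivative identity \eqref{eqn:lisini_derivative_FP} follows by combining the ``$\leq$'' inequality from \Cref{prop:AC1} with the reverse ``$\geq$'' obtained from the fact that the constructed curve has total length equal to the $\cA\cW_p$-length of the original curve, forced by the optimality of the $\pi^{n,k}$ in the first step. \textbf{The main obstacle} is expected to be precisely this last identification: showing that the bicausal structure survives the weak limit and yields $\cA\cW_p(\Y^u,\X^u)=0$ for \emph{every} $u$, not merely almost every $u$. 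Unlike Lisini's classical setting, where only the one-dimensional marginals need to match, here one must also preserve the conditional-independence relations defining bicausality through both the limit in $n$ and the Skorokhod re-realization, which dictates a careful choice of the filtration on $\Omega$ (plausibly the one generated by all $Y^u_s$ for $s \leq t$, enriched so as to inherit the multicausal structure from the approximating $\gamma_n$).
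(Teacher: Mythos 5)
Your outline captures the spirit of the argument — discretization via dyadic partitions, multicausal gluing, a tightness bound, a weak limit, and a continuity argument to pass from almost-every $u$ to every $u$ — but the route you take in the limit step diverges from the paper's in a way that leaves the crucial filtration issue, which you yourself flag as ``the main obstacle,'' unresolved rather than circumvented.

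The paper avoids the obstacle altogether by never leaving the category of filtered processes. It forms the piecewise-\emph{constant} (not piecewise-linear) interpolant $\sigma_N:\Omega^{[N]}\to L^p(\Xc)$ and regards $\Y^N=(\Omega^{[N]},\cF^{[N]},(\cF^{[N]}_t)_t,\gamma_N,\sigma_N)$ as a filtered process with path space $\Pi_t L^p(\Xc_t)$. Precompactness is then obtained in $\FP(L^p(\Xc))$ with respect to the adapted weak topology via \Cref{lem::relatively_compact}, so a cluster point is itself a filtered process $\Y=(\Omega,\Fc,(\Fc_t)_t,\P,\sigma)$: the filtered probability space $(\Omega,\Fc,(\Fc_t)_t,\P)$ comes \emph{for free} with the limit, and adaptedness of $\sigma$ (hence of every $Y^u = e^u\circ T\circ\sigma$) is automatic. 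In contrast, your plan extracts a weak limit in $\cP(C([0,1];V))$ and then invokes Skorokhod representation — this produces random path-valued variables on an abstract probability space but destroys the filtration information. Your remedy (``the filtration generated by all $Y^u_s$ for $s\le t$, enriched so as to inherit the multicausal structure'') is precisely the point that would require a genuine argument: bicausality is a statement about conditional independence relative to the filtrations of the coupled filtered processes, and there is no reason the generated filtration would reproduce the correct filtration of $\X^u$ (two processes with identical path laws need not be $\FP$-equivalent). Nothing in your sketch shows that this gap can be closed, and the paper's change of ambient space is exactly what makes it unnecessary.

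A second, smaller gap is your claim that $\cA\cW_p(\Y^u,\X^u)=0$ for \emph{dyadic} $u$ is ``a direct consequence of the construction.'' It is not: the approximants satisfy $\cA\cW_p(\Y^{N,i/2^N},\X^{i/2^N})=0$, but the limit $\Y^u$ is obtained by evaluating a cluster point at $u$, and pointwise evaluation does not commute with the weak limit in $\cP(L^p(\Xc))$ (nor in $\cP(C([0,1];V))$ without a uniform-in-$N$ modulus argument). The paper instead tests against $\int_0^1 \alpha(u)\,\E[f(\X^u)]\,\mathrm{d}u$ for $\alpha\in C_b(\R)$ and $f$ continuous bounded on the canonical space $\Zc$ — a map that \emph{is} continuous in the adapted weak topology — to obtain $\E[f(\X^u)]=\E[f(\Y^u)]$ for a.e.\ $u$, and only then upgrades to every $u$ via continuity of both sides (from \Cref{lem:inform_process} and \Cref{prop:AC1}). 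Your ``pass from dyadic to general $u$ by continuity'' step is fine once a.e.\ equality is in hand, but the dyadic step itself needs the integrated test-function argument or an equivalent substitute.

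Finally, a note on the interpolant: your piecewise-linear interpolation in a Banach space $V\supseteq\Xc$ would in principle work (the limit paths stay $\Xc$-valued if $\Xc$ is closed in $V$, by a density/continuity argument), and it does make the paths continuous from the outset. The paper's piecewise-constant interpolant is discontinuous, which is why it lands in $L^p(\Xc)$ and the proof must subsequently show concentration on $W^{1,p}(\Xc)$ and invoke the measurable-selection Lemma~\ref{lem:sobolev_repres} to produce continuous representatives. Both approaches are viable for the tightness step; the paper's choice is dictated by wanting to remain inside $\FP(L^p(\Xc))$ so the adapted-weak compactness machinery applies directly.
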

\begin{proof} The proof is lengthy and is thus postponed to \Cref{sec:proof_repres}.
\end{proof}

\begin{remark} From the proof of Theorem~\ref{prop:AC_curve_in_FP}, it is clear that, when $\X^\bullet \in AC^p(\FP_p)$, we in particular obtain that 
\[ \Y\coloneqq \big( \Omega, \Fc, (\Fc_t)_{t=1}^T,\P, (Y_t^\bullet)_{t=1}^T \big)\] is a filtered process with $Y_t^\bullet \in  AC^p(\Xc_t)$, that is, $\Y \in \Fc\Pc\big(\prod_{t=1}^T AC^p(\Xc_t)\big).$
\end{remark}

\section{A Benamou--Brenier-type result} \label{sec:Benamou-Brenier}

This section focuses on geodesics in \((\FP_p, \cA\cW_p)\). In a metric space, a constant-speed minimizing geodesic is a curve that connects two points while tracing the shortest path between them at uniform speed. This generalizes the notion of straight lines from Euclidean geometry to more abstract spaces, where distances can be measured without relying on a linear or smooth structure.
It was previously noted in \cite[Theorem 5.10]{BaBePa21} that $(\FP_p, \cA\cW_p)$ is a geodesic space if all $\cX_t$ for $t \in \{1,\ldots,T\}$ are geodesic. The main result in this section is a complete characterization of geodesics in \((\FP_p, \cA\cW_p)\) and a proof that, just like in the classical Wasserstein setting, they follow displacement interpolation by moving along geodesics in $\cX$. We begin by recalling basic definitions in the general setting of metric spaces.

Let $(\Yc,d_{\Yc})$ be a metric space. The length $L(\gamma)$ of a continuous curve $\gamma : [0,1] \to \Yc$ is defined as
\[
    L(\gamma) \coloneqq \inf\bigg\{\sum_{k = 0}^{N-1} d_{\Yc}(\gamma(u_{k+1}),\gamma(u_k)) \,\bigg|\, N \in \mathbb{N}, \, 0 \leq u_0 < u_1 < \dots < u_N \leq 1\bigg\}.
\] 
If, for all $x,y \in \Yc$, the distance $d_{\Yc}$ satisfies
\[
    d_{\Yc}(x,y) = \inf L(\gamma),
\]
with the infimum taken over all continuous curves $\gamma$ connecting $x$ to $y$, we call $(\Yc,d_{\Yc})$ a length space.
A continuous curve $\gamma : [0,1] \to \Yc$ is called a minimizing geodesic if it satisfies
\[
    d_{\Yc}(\gamma(0), \gamma(1)) = L(\gamma).
\]
Moreover, if $\gamma$ additionally satisfies the condition
\[
    d_{\Yc}(\gamma(u),\gamma(v)) = |u-v| d_{\Yc}(\gamma(1),\gamma(0)), \quad u,  v \in [0,1],
\]
then we say that $\gamma$ is a constant-speed minimizing geodesic. Lastly, if for every pair of points $x,y \in \Yc$ there exists a $($resp. unique$)$ constant-speed geodesic connecting them, then $(\Yc,d_\cY)$ is called a (resp. uniquely) geodesic space.
We let \[G(\Xc):=\{ f :[0,1]\longrightarrow \Xc \,\vert\, f \text{ is a constant-speed minimizing geodesic of }\Xc \} \] and notice it is a closed subset of $C(\Xc)$, when the latter is endowed with the topology of uniform convergence. We then endow $G(\cX)$ the inherited metric $d_{G(\Xc)}(f,g)\coloneqq\sup_{u \in [0,1]}d_{\Xc}(f(u),g(u)),$ $(f,g) \in G(\Xc)^2.$

\begin{theorem} \label{lem:Benamou_Brenier_1} Let $p \in (0,\infty)$. A curve $[0,1] \ni u \mapsto \X^u \in \FP_p$ is a constant speed geodesic if and only if there exists a filtered space $(\Omega,\Fc,(\Fc_t)_{t=1}^T,\P)$ with $(\Fc_t)_{t=1}^T$-adapted processes $(Y^u)_{u \in [0,1]}$ such that
\begin{enumerate}[label=(\roman*)]
    \item \label{item:BB1}  $\X^u\sim \Y^u\coloneqq(\Omega,\Fc,(\Fc_t)_{t=1}^T,\P,Y^u),$ $u \in [0,1];$
    \item \label{item:BB2} $\eta \coloneqq \law_{\P}((Y^u)_{u \in [0,1]})$ is concentrated on $G(\Xc);$
    \item \label{item:BB3} $\displaystyle \Ac\Wc_p^p(\X^0,\X^1)=\int d^p_{\cX,p}(Y^0(\omega),Y^1(\omega)) \P(\mathrm{d}\omega).$
\end{enumerate}
\end{theorem}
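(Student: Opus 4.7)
The plan is to prove both directions by leveraging the two characterizations of $AC^p(\FP_p)$ already established: \Cref{prop:AC1} will give sufficiency, while \Cref{prop:AC_curve_in_FP} will give necessity, the latter upgraded via the saturation of Jensen's inequality along a constant-speed geodesic. Throughout, the metric derivative $|f'|$ is understood with respect to the metric $d_{\Xc,p}$ underlying $\cA\cW_p$ (which agrees with $d_\Xc$ up to topological equivalence, so that $AC^p(\Xc)$ and $G(\Xc)$ are unchanged by this choice).

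For sufficiency, I assume \ref{item:BB1}--\ref{item:BB3} and observe that $G(\Xc) \subset AC^p(\Xc)$, with every $f \in G(\Xc)$ having constant derivative $|f'|(u) \equiv d_{\Xc,p}(f(0),f(1))$. \Cref{prop:AC1} then gives $\Y^\bullet \in AC^p(\FP_p)$ with
\[
    |\Y'|^p(u) \leq \int_{C(\Xc)} |f'|^p(u)\,\eta(\d f) = \int d^p_{\Xc,p}(Y^0,Y^1)\,\d\P = \cA\cW_p^p(\X^0,\X^1),
\]
where the last equality uses \ref{item:BB3}. The length of $\X^\bullet$ is therefore at most $\cA\cW_p(\X^0,\X^1)$, making it a minimizing geodesic. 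Constant speed then follows by applying the same argument on any subinterval $[s,t]$: since $d_{\Xc,p}(f(s),f(t)) = |t-s|\, d_{\Xc,p}(f(0),f(1))$ for $f \in G(\Xc)$, the diagonal coupling $({\rm id},{\rm id})_{\#}\P \in \cplbc(\Y^s,\Y^t)$ yields $\cA\cW_p(\X^s,\X^t) \leq |t-s|\cA\cW_p(\X^0,\X^1)$, and the triangle inequality forces equality.

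For necessity, I apply \Cref{prop:AC_curve_in_FP} to $\X^\bullet$ to obtain a filtered space with adapted processes $(Y^u)_u$ satisfying \ref{item:BB1}, with $\eta \coloneqq \law_\P((Y^u)_{u \in [0,1]})$ concentrated on $AC^p(\Xc)$ and
\[
    |\X'|^p(u) = \int_{C(\Xc)} |f'|^p(u)\,\eta(\d f), \quad \text{for a.e.\ } u \in [0,1].
\]
Since $\X^\bullet$ has constant speed $\cA\cW_p(\X^0,\X^1)$, integrating this identity over $[0,1]$ and applying Fubini gives
\[
    \cA\cW_p^p(\X^0,\X^1) = \int_{C(\Xc)} \int_0^1 |f'|^p(u)\,\d u\,\eta(\d f).
\]
Jensen's inequality combined with the length--distance bound yields, for each $f \in AC^p(\Xc)$,
\[
    \int_0^1 |f'|^p(u)\,\d u \geq \bigg(\int_0^1 |f'|(u)\,\d u\bigg)^p \geq d_{\Xc,p}^p(f(0),f(1)),
\]
so integrating against $\eta$ gives $\cA\cW_p^p(\X^0,\X^1) \geq \int d_{\Xc,p}^p(Y^0,Y^1)\,\d\P$. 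The reverse inequality is automatic from the diagonal bicausal coupling $({\rm id},{\rm id})_{\#}\P \in \cplbc(\Y^0,\Y^1)$, so equality holds throughout. This directly gives \ref{item:BB3} and forces $\eta$-a.s.\ saturation of both intermediate inequalities: from Jensen, $|f'|$ is constant in $u$ $\eta$-a.s., and from length--distance, $f$ realizes its metric length as $d_{\Xc,p}(f(0),f(1))$, i.e.\ is a minimizing geodesic $\eta$-a.s. Together these imply $f \in G(\Xc)$ $\eta$-a.s., which is \ref{item:BB2}.

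The main point requiring care is to ensure that the metric derivative used in \Cref{prop:AC_curve_in_FP} and the membership in $AC^p(\Xc)$ are both taken with respect to the same metric $d_{\Xc,p}$ that underlies $\cA\cW_p$, since $\Xc$ carries two equivalent but distinct natural metrics $d_\Xc$ and $d_{\Xc,p}$; this consistency is precisely what links the path-by-path saturation analysis to the geodesic structure of $(\FP_p,\cA\cW_p)$. The remaining ingredients --- Fubini, Jensen, the length--distance inequality, and exploitation of the diagonal coupling --- are standard and present no deeper obstacle.
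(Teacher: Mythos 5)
Your proof is correct and follows essentially the same route as the paper: invoke \Cref{prop:AC_curve_in_FP} for necessity and \Cref{prop:AC1} for sufficiency, then use the diagonal coupling, Fubini, and the chain $d_{\Xc,p}^p(f(0),f(1)) \le (\int_0^1 |f'|\,\d u)^p \le \int_0^1 |f'|^p\,\d u$ to force equality $\eta$-a.s. (the paper folds Jensen and the length--distance bound into a single "immediate" inequality where you unpack the saturation of each step). Your extra passage through "minimizing geodesic" in the sufficiency direction is redundant — the subinterval diagonal-coupling bound plus the triangle inequality already gives constant speed directly, as the paper does — and your concluding remark about the two metrics $d_\Xc$ vs.\ $d_{\Xc,p}$ is a legitimate point the paper leaves implicit; note also that both your argument and the paper's tacitly require $p>1$ (via \Cref{prop:AC_curve_in_FP} and strict convexity in Jensen), so the stated range $p\in(0,\infty)$ is in tension with the proof in either version.
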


\begin{remark}
\begin{enumerate}[label = (\roman*)]
    \item It is clear that \Cref{item:BB2} in \Cref{lem:Benamou_Brenier_1} means that the paths $u \mapsto Y^u(\omega)$ are absolutely continuous for $\P$--almost all $\omega \in \Omega.$
        
    \item Let $\Xc$ be a uniquely geodesic space and for $x,y \in \Xc$ denote $\gamma^{x,y} : [0,1] \longmapsto \Xc$ the constant-speed minimizing geodesic connecting $x$ and $y.$ In this particular case, we obtain that, for $t \in [0,T]$,
    \begin{align*} \int d^p_{\cX,p}\big(\gamma^{Y^0(\omega),Y^1(\omega)}(t),Y^t(\omega)\big) \P(\mathrm{d} \omega)= \int d^p_{\cX,p}\big(\gamma^{f(0),f(1)}(t),f(t) \big) \eta(\mathrm{d}f)=0,\end{align*} where the last equality follows from the fact that $f(t)=\gamma^{f(0),f(1)}(t)$ for any $f \in G(\Xc),$ which holds by uniqueness of $\gamma^{f(0),f(1)}$. Hence, it holds \begin{align}\label{eqn:const_spd_geo} Y^t = \gamma^{Y^0,Y^1}(t),\quad \P \text{--a.s.\ for every }t \in [0,T]. \end{align}
    As both sides in \eqref{eqn:const_spd_geo} are $\P$--a.s.\ continuous in $t,$ the statement holds, in fact, for every $t \in [0,T]$ $\P$--almost surely. 
    
 To be precise, $\gamma$ above has to be replaced by a measure $\gamma_{\P}$ as follows: the map $\gamma^{\cdot,\cdot}: \Xc \times \Xc \longrightarrow G(\Xc)$ admits a Borel-measurable version $\gamma_{\P}^{\cdot,\cdot} : \Xc\times \Xc \longrightarrow G(\Xc)$ such that \[\gamma_{\P}^{Y^0,Y^1}=\gamma^{Y^0,Y^1} \text{ everywhere except a }\P\text{--null set}. \] This follows from the fact that $\gamma^{\cdot,\cdot}$ has a closed graph \cite[Lemma 3.11]{Ambrosio2013users} and from Aumann's selection theorem \cite[Theorem 6.9.13]{bogachev2007measure}.
    
    \item If $\Xc$ is a Hilbert space, then
    \[ G(\Xc)=\{ f : [0,1] \longrightarrow \Xc \, \vert\, f(t)=(1-t)x+ty,\; t \in [0,1] , \text{ for some } x,y \in \Xc\}, \] and so it holds $\P$--almost surely for every $t \in [0,T]$ that \[ Y^t = (1-t) Y^0+tY^1.\]
\end{enumerate}
\end{remark}

\begin{proof}[Proof of \Cref{lem:Benamou_Brenier_1}] Let $(\X^u)_{u \in [0,1]}$ be a constant speed geodesic in $\Fc\Pc_p.$ Then $u \mapsto \X^u$ is absolutely continuous in $\Fc\Pc_p$ and thus, invoking \Cref{prop:AC_curve_in_FP}, there exists a probability space $(\Omega,\Fc,(\Fc_t)_{t=1}^T,\P)$ with $(\Fc_t)_{t=1}^T$-adapted processes $(Y^u)_{u \in [0,1]}$ satisfying \Cref{item:BB1}. Since $\Y^u \sim \X^u,$ we have that $u \mapsto \Y^u$ is a constant speed geodesic and $|\X^\prime|(u)=|\Y^\prime|(u)$ for almost every $u \in [0,1].$ Consequently, as in the proof of \Cref{prop:AC1}, we have
\begin{align}
\begin{split}
  \int_{C(\Xc)} & d^p_{\cX,p}(f(0),f(1)) \eta(\mathrm{d}f) \\
  &=\int d^p_{\cX,p}(Y^0(\omega),Y^1(\omega)) \P(\mathrm{d}\omega) \geq \Ac\Wc_p^p(\Y^0,\Y^1)  \\
  &=\int_0^1 |\Y^\prime|^p(u)\mathrm{d}u
  =\int_0^1 \int_{C(\Xc)} |f^\prime|^p(u) \eta(\mathrm{d}f)\mathrm{d}u\\
  &= \int_{C(\Xc)} \int_0^1 |f^\prime|^p(u) \mathrm{d}u\,\eta(\mathrm{d}f),
  \end{split} \label{eqn:BB_est}
\end{align} 
where it is used that $u \mapsto \Y^u$ is a constant-speed geodesic in the third (in)equality. As the inequality $d^p_{\cX,p}(f(0),f(1))\leq\int_0^1 |f^\prime|^p(u)\mathrm{d}u$ is immediate, we deduce from (4.2) that
\[
d^p_{\cX,p}(f(0),f(1))=\int_0^1 |f^\prime|^p(u)\mathrm{d}u,\quad \eta\text{--a.s.}
\] 
and $\eta$ is thus concentrated on $G(\Xc).$ This proves \Cref{item:BB2}. \Cref{item:BB3} then immediately follows from \eqref{eqn:BB_est}.

Conversely, assume that $(\Omega,\Fc,(\Fc_t)_{t=1}^T,\P)$ and $(Y^u)_{u \in [0,1]}$ satisfy \Cref{item:BB1,item:BB2,item:BB3}. Then we have
\begin{align*} 
\Ac\Wc_p^p(\X^t,\X^s)=\Ac\Wc_p^p(\Y^t,\Y^s) &\leq \int d^p_{\cX,p}(Y^t(\omega),Y^s(\omega)) \P(\mathrm{d}\omega) \\
&=\int_{C(\Xc)} d^p_{\cX,p}(f(t),f(s)) \eta(\mathrm{d}f)\\
& =|t-s|^p \int_{C(\Xc)} d^p_{\cX,p}(f(0),f(1)) \eta(\mathrm{d}f) \\
&=|t-s|^p\int d^p_{\cX,p}(Y^0(\omega),Y^1(\omega)) \P(\mathrm{d}\omega) \\
&=|t-s|^p \Ac\Wc_p^p(\Y^0,\Y^1)=|t-s|^p \Ac\Wc_p^p(\X^0,\X^1),
\end{align*} 
where we used that  $({\rm id},{\rm id})_{\#}\P \in \cplbc(\Y^t,\Y^s),$ $\eta$ is concentrated on $G(\Xc)$, and \Cref{item:BB3}. Assume that the inequality above is strict for some $t<s.$ Then, using the triangle inequality, we have
\begin{align*}
    \Ac\Wc_p(\X^0,\X^1)&\leq\Ac\Wc_p(\X^0,\X^t)+\Ac\Wc_p(\X^t,\X^s)+\Ac\Wc_p(\X^s,\X^1) \\
    &< (t+(s-t)+(1-s))\Ac\Wc_p(\X^0,\X^1)=\Ac\Wc_p(\X^0,\X^1).
    \end{align*} This is a contradiction and we thus have
    \[\Ac\Wc_p^p(\X^t,\X^s)=|t-s|^p \Ac\Wc_p^p(\X^0,\X^1), \] 
    which concludes the proof.
\end{proof}

Finally, using our representation, we are able to formulate the adapted optimal transport problem as an energy minimization. We thus conclude this section with the following adapted version of the Benamou--Brenier theorem.

\begin{theorem}\label{thm:Benamou-Brenier}
   Suppose $\cX$ is a geodesic space, and let $(\X^0,\X^1) \in \cF\cP_p \times \cF\cP_p$ for some $p \in (0,\infty)$. Then
    \begin{equation*}
        \cA\cW^p_p(\X^0,\X^1) = \min\bigg\{\int_0^1\int_{C(\cX)} |f^\prime|^p(r)\eta(\d f)\d r \,\bigg|\, \eta \in \sA(\X^0,\X^1)\bigg\},
    \end{equation*}
 where $\sA(\X^0,\X^1)$ is the set of all measures $\eta \in \cP(C(\cX))$ concentrated on $AC^p(\cX)$ and such that $\eta= \law_{\P}((Y^u)_{u \in [0,1]}),$ for some $(\Omega, \Fc, (\Fc_t)_{t=1}^T,\P)$ supporting $(\Fc_t)_{t=1}^T$-adapted processes $Y^u$ with $\X^0 \sim \Y^0$ and $\X^1 \sim \Y^1,$ where $\Y^0,\Y^1$ are defined as in {\rm\Cref{lem:Benamou_Brenier_1}}.
\end{theorem}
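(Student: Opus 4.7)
The strategy is to prove matching bounds: that $\cA\cW_p^p(\X^0,\X^1)$ lower-bounds the right-hand side, and that this bound is achieved by a specific $\eta^\ast \in \sA(\X^0,\X^1)$, whence the infimum is a minimum. The plan calls on \Cref{prop:AC1} for the lower bound and on \Cref{lem:Benamou_Brenier_1} for attainment; together with \cite[Theorem 5.10]{BaBePa21} these are the only nontrivial ingredients.

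For the lower bound, I would take an arbitrary $\eta \in \sA(\X^0,\X^1)$, realised as $\eta = \law_\P((Y^u)_{u \in [0,1]})$ for some family of $(\Fc_t)_{t=1}^T$-adapted processes on a filtered probability space with $\Y^0 \sim \X^0$ and $\Y^1 \sim \X^1$. Assuming the integral in question is finite (otherwise there is nothing to prove), the integrability hypothesis \eqref{eqn:finite_mom} of \Cref{prop:AC1} holds, so that proposition yields $u \mapsto \Y^u \in AC^p(\FP_p)$ with $|\Y'|^p(u) \leq \int_{C(\X)} |f'|^p(u)\, \eta(\d f)$ for a.e.\ $u$. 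Combining $\cA\cW_p(\X^0,\X^1) = \cA\cW_p(\Y^0,\Y^1) \leq \int_0^1 |\Y'|(r) \d r$ with Jensen's inequality (using $p \geq 1$) and Fubini's theorem gives
\[
\cA\cW_p^p(\X^0,\X^1) \leq \int_0^1 |\Y'|^p(r)\,\d r \leq \int_0^1 \int_{C(\X)} |f'|^p(r)\,\eta(\d f)\,\d r.
\]

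For attainment, I would invoke the fact that $\X$ being geodesic forces $(\FP_p,\cA\cW_p)$ to be geodesic by \cite[Theorem 5.10]{BaBePa21}, and pick a constant-speed minimizing geodesic $(\X^u)_{u \in [0,1]}$ joining $\X^0$ to $\X^1$. Applying \Cref{lem:Benamou_Brenier_1} to this geodesic delivers adapted processes $(Y^u)_{u \in [0,1]}$ on a common filtered space with $\eta^\ast \coloneqq \law_\P((Y^u)_u)$ concentrated on $G(\X)$, each $\Y^u \sim \X^u$, and $\cA\cW_p^p(\X^0,\X^1) = \int d_{\X,p}^p(Y^0,Y^1)\,\d\P$. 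Since $G(\X) \subseteq AC^p(\X)$ and the endpoint identifications hold, $\eta^\ast \in \sA(\X^0,\X^1)$. Constant-speed geodesics satisfy $|f'|^p(r) = d_{\X,p}^p(f(0),f(1))$ for a.e.\ $r$, so integrating against $\eta^\ast$ recovers
\[
\int_0^1 \int_{C(\X)} |f'|^p(r)\,\eta^\ast(\d f)\, \d r = \int_{C(\X)} d_{\X,p}^p(f(0),f(1))\,\eta^\ast(\d f) = \cA\cW_p^p(\X^0,\X^1).
\]

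The main delicate point is keeping the metric conventions coherent: $G(\X)$ is defined using the sum metric $d_\X$, while $\cA\cW_p$ and the integrand $|f'|^p$ naturally involve the $p$-metric $d_{\X,p}$. For the identification $|f'|^p(r) = d_{\X,p}^p(f(0),f(1))$ on $G(\X)$ to go through, one either fixes $d_{\X,p}$ as the ambient metric on $\X$ throughout, or argues separately that constant-speed geodesics in $d_\X$ split componentwise into constant-speed geodesics in each $(\X_t, d_{\X_t})$, which then forces the same identity. Once this bookkeeping is settled, no new technical tools beyond those cited above are required.
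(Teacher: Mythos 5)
Your proof is essentially correct and shares the same overall structure as the paper's: the lower bound via \Cref{prop:AC1} is identical, and attainment proceeds by choosing a constant-speed geodesic $(\X^u)_u$ and producing a representing $\eta^\ast$. The one genuine difference is in how attainment is closed. The paper does not go through \Cref{lem:Benamou_Brenier_1}; instead, it takes the representation $(\Y^u)_u$ of the constant-speed geodesic directly from \Cref{prop:AC_curve_in_FP} and uses the \emph{equality} \eqref{eqn:lisini_derivative_FP}, $|\Y'|^p(u)=\int_{C(\cX)}|f'|^p(u)\,\eta(\d f)$ for a.e.\ $u$, together with the fact that the metric derivative of a constant-speed geodesic is the constant $\cA\cW_p(\X^0,\X^1)$. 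This sidesteps entirely the pointwise identity $|f'|^p(r)=d^p_{\cX,p}(f(0),f(1))$ on $G(\cX)$ that your argument relies upon, and therefore avoids the metric-bookkeeping question you raise.

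That question is real, and you are right to be careful about it: your first fix (defining $G(\cX)$ and the metric derivatives throughout with respect to the $p$-metric $d_{\cX,p}$) is the correct one, and indeed is what the proof of \Cref{lem:Benamou_Brenier_1} implicitly establishes. Your second proposed fix, however, fails: a constant-speed minimizing geodesic for the \emph{sum} metric $d_\cX=d_{\cX,1}$ need \emph{not} split into componentwise constant-speed geodesics. For instance, with $T=2$, $\cX_1=\cX_2=\R$, $f_1(u)=u^2$ and $f_2(u)=2u-u^2$, the curve $f=(f_1,f_2)$ has $|f_1'|(u)+|f_2'|(u)\equiv 2$, so $f$ is a constant-speed $d_\cX$-geodesic, but neither component has constant speed; for $p>1$, $|f'|_{d_{\cX,p}}(u)=(|2u|^p+|2-2u|^p)^{1/p}$ is not constant and the identity $|f'|^p_{d_{\cX,p}}(r)=d^p_{\cX,p}(f(0),f(1))$ fails. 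By contrast, a constant-speed minimizing $d_{\cX,p}$-geodesic for $p>1$ \emph{does} split componentwise (equality in Minkowski's integral inequality forces proportional component speeds, and constancy of the total speed then forces each to be constant). So your argument goes through under fix (a), which is also consistent with how $G(\cX)$ is actually used in the paper; fix (b) should be dropped.
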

\begin{proof}
    The inequality ``$\leq$'' follows from \Cref{prop:AC1}. Indeed, for each $\eta \in \sA(\X,\Y)$, 
    \begin{align*} 
        \cA\cW^p_p(\X^0,\X^1) 
        &= \cA\cW^p_p(\Y^0,\Y^1) 
        = \bigg(\int_0^1 |\Y^\prime|(r)\d r\bigg)^p \leq \int_0^1 |\Y^\prime|^p(r) \d r  \\
        &        \leq \int_0^1 \int_{C(\Xc)} |f^\prime |^p(u)\eta(\mathrm{d}f) \d r.
    \end{align*}
    Conversely, since $\cF\cP_p$ is a geodesic space, let $u \mapsto \Y^u$ be a constant speed geodesic such that $\Y^0 = \X^0$ and $\Y^1 = \X^1$. We suppose, without loss of generality, that $(\Y^u)_{u \in [0,1]}$ is constructed as in \Cref{prop:AC_curve_in_FP}. Then
    \begin{equation*}
        \cA\cW^p_p(\X^0,\X^1) 
        = \cA\cW^p_p(\Y^0,\Y^1)  = \int_0^1 |\Y^\prime|^p_p(r)\d r = \int_0^1 \int_{C(\Xc)} |f^\prime |^p_p(u)\eta(\mathrm{d}f) \d r.
    \end{equation*}
    Here, the last equality follows from \eqref{eqn:lisini_derivative_FP}. This concludes the proof.
\end{proof}

\section{Application: Skorokhod representation} \label{sec:skorokhod}

The standard Skorokhod representation theorem states that, if a sequence of probability measures converges weakly, then there exists a probability space on which one can define random variables with these distributions that converge almost surely. In essence, it allows us to replace weak convergence with almost sure convergence by constructing all the variables on a suitably chosen common probability space.

In this section, as an application of our results, we prove the Skorokhod representation of convergent sequences in the adapted Wasserstein space. Albeit in a different setting, the first result on the Skorokhod representation for convergent sequences in the adapted weak topology was proved by \citeauthor*{hoover1991convergence} \cite{hoover1991convergence}, who specifically considered random variables equipped with filtrations rather than processes. More recently, in the same setting as this paper, a Skorokhod representation for convergent sequences in the adapted Wasserstein distance was established by \citeauthor*{beiglbock2024probabilistic} \cite{beiglbock2024probabilistic}, using the so-called transfer principle.

The idea of our approach is simple: we linearly interpolate between elements of the sequence to obtain a curve, to which \Cref{prop:AC_curve_in_FP} can be applied. However, to ensure that the resulting curve is absolutely continuous, we must impose additional assumptions on the sequence.

\begin{definition} Let $p \in (1,\infty)$ and $(y^n)_{n \in \N}$ be a sequence in a metric space $(\cY,d_\cY)$. We say that $(y^n)_{n \in \N}$ has finite weighted $p$-variation if there exists a sequence of positive numbers $(b_n)_{n \in \N}$ such that
\[ \sum_{n=1}^\infty b_n=1 \quad {\rm and}\quad \sum_{n=1}^\infty \frac{1}{b_n^{p-1} }d^p_\cY(y^n,y^{n+1})<\infty.\]
\end{definition}

\begin{remark} Note that, if $(\X^n)_{n \in \N}$ satisfies \begin{align}\label{eqn:summable_diff}\sum_{n=1}^\infty\Ac\Wc_p(\X^n,\X^{n+1})<\infty,
\end{align} then it has finite weighted $p$-variation. Indeed, defining $\tilde b_n\coloneqq \Ac\Wc_p(\X^n,\X^{n+1})$ gives
\[ \sum_{n=1}^\infty \tilde b_n<\infty\quad {\rm and}\quad \sum_{n=1}^\infty \frac{1}{\tilde b_n^{p-1} }\Ac\Wc_p^p(\X^n,\X^{n+1})=\sum_{n=1}^\infty \tilde b_n<\infty.\] Thus, we may take 
\[ b_n \coloneqq \frac{\tilde b_n}{\sum_{i=1}^\infty \tilde b_i},\]
if $\sum_{i=1}^\infty \tilde b_i>0,$ while the statement is trivial if $\sum_{i=1}^\infty \tilde b_i=0.$
\end{remark}

The main result of this section follows. We show the Skorokhod representation of convergent sequences with finite weighted $p$-variation in the adapted Wasserstein space. We point out that the assumption on the speed of convergence $Y^n \longrightarrow Y^\infty$ is generally not necessary, see \cite[Corollary 7.5]{beiglbock2024probabilistic}. However, in order to apply our results, we need impose it to ensure the absolute continuity of the linearly interpolated curve. On the other hand, the resulting sequence almost surely inherits the same rate of convergence.

\begin{proposition} \label{prop:Skoro_repr} Let $p \in (1,\infty)$ and let $(\X^n)_{n \in \N\cup\{\infty\}}\subseteq \FP_p$. Then $(\X^n)_{n \in \N}$ has finite weighted $p$-variation and converges to $\X^\infty$ if and only if $(\X^n)_{n \in \N \cup \{ \infty \}}$ admits a Skorokhod representation, i.e., there exists a filtered probability space $(\Omega,\cF,(\cF_t)_{t=1}^T,\P)$ and a family of process $Y^n = (Y^n_t)_{t=1}^T : \Omega \rightarrow \cX$, $n \in \N \cup \{\infty\},$ such that the filtered process
    \begin{equation} \label{eqn:def_FP_Skorokhod}
        \Y^n = (\Omega,\cF,(\cF_t)_{t=1}^T,\P, (Y^n_t)_{t=1}^T)
    \end{equation}
    satisfies $\cA\cW_p(\Y^n,\X^n) = 0$ for every $n \in \N \cup \{\infty\}$, and the sequence $(Y^n)_{n \in \N}$ converges $\P$--a.s.\ to $Y^\infty$ and has finite weighted $p$-variation in $L^p(\P;\Xc).$
\end{proposition}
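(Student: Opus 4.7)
The proof splits into two directions. For the \emph{only if} part, the coupling $(\mathrm{id},\mathrm{id})_{\#}\P \in \cplbc(\Y^n,\Y^{n+1})$ gives $\Ac\Wc_p(\X^n,\X^{n+1}) = \Ac\Wc_p(\Y^n,\Y^{n+1}) \leq \|d_{\cX,p}(Y^n,Y^{n+1})\|_{L^p(\P)}$, so the finite weighted $p$-variation of $(Y^n)$ in $L^p(\P;\cX)$ transfers directly to $(\X^n)$. For convergence, combining this with H\"older's inequality,
\begin{equation*}
\sum_{n \geq N} \|d_{\cX,p}(Y^n,Y^{n+1})\|_{L^p(\P)} \leq \bigg(\sum_{n \geq N} b_{n+1}^{1-p}\|d_{\cX,p}(Y^n,Y^{n+1})\|_{L^p(\P)}^p\bigg)^{1/p} \bigg(\sum_{n \geq N} b_{n+1}\bigg)^{(p-1)/p} \to 0,
\end{equation*}
shows that $(Y^n)$ is Cauchy in $L^p(\P;\cX)$; its $L^p$-limit agrees with the almost sure limit $Y^\infty$, and hence $\Ac\Wc_p(\X^n,\X^\infty) = \Ac\Wc_p(\Y^n,\Y^\infty) \leq \|d_{\cX,p}(Y^n,Y^\infty)\|_{L^p(\P)} \to 0$.

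For the \emph{if} direction, the strategy is to thread $(\X^n)_{n \in \N \cup \{\infty\}}$ through an absolutely continuous curve in $(\FP_p, \Ac\Wc_p)$ and invoke \Cref{prop:AC_curve_in_FP}. With $(b_n)$ as in the definition, set $u_0 := 0$, $u_n := \sum_{k=1}^n b_k$ (so that $u_n \nearrow 1$), and $u_\infty := 1$. Place $\X^n$ at $u_n$, set $\X^1 := \X^\infty$, and on each interval $[u_n,u_{n+1}]$ interpolate by a constant-speed minimizing geodesic in $(\FP_p, \Ac\Wc_p)$ (available via \Cref{lem:Benamou_Brenier_1} when $\cX$ is geodesic; otherwise one interpolates by any absolutely continuous path of $p$-energy close to $\Ac\Wc_p^p(\X^n,\X^{n+1})/b_{n+1}^{p-1}$). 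The resulting curve has metric derivative $\Ac\Wc_p(\X^n,\X^{n+1})/b_{n+1}$ on $(u_n,u_{n+1})$, so its total $p$-energy is $\sum_n b_{n+1}^{1-p}\Ac\Wc_p^p(\X^n,\X^{n+1}) < \infty$. Continuity at $u=1$ follows from $\Ac\Wc_p(\X^{u_n},\X^1) = \Ac\Wc_p(\X^n,\X^\infty) \to 0$, so $u \mapsto \X^u \in AC^p(\FP_p)$.

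Then \Cref{prop:AC_curve_in_FP} yields a filtered probability space $(\Omega,\Fc,(\Fc_t)_{t=1}^T,\P)$ and processes $(Y^u)_{u \in [0,1]}$ with $\Ac\Wc_p(\Y^u,\X^u) = 0$, paths $u \mapsto Y^u(\omega)$ in $AC^p(\cX)$ for every $\omega$, and $\eta := \law_{\P}((Y^u)_{u \in [0,1]})$ satisfying \eqref{eqn:lisini_derivative_FP}. Setting $Y^n := Y^{u_n}$ and $Y^\infty := Y^1$, continuity of the paths gives $Y^n \to Y^\infty$ almost surely. For the finite weighted $p$-variation in $L^p$, H\"older's inequality and Fubini yield
\begin{equation*}
\E[d^p_{\cX,p}(Y^n,Y^{n+1})] \leq \int_{C(\cX)}\bigg(\int_{u_n}^{u_{n+1}} |f^\prime|(r)\,\d r\bigg)^{\!p} \eta(\d f) \leq b_{n+1}^{p-1}\int_{u_n}^{u_{n+1}} \int_{C(\cX)} |f^\prime|^p(r)\,\eta(\d f)\,\d r,
\end{equation*}
and summing these estimates, weighted by $b_{n+1}^{1-p}$, together with \eqref{eqn:lisini_derivative_FP}, bounds $\sum_n b_{n+1}^{1-p}\E[d^p_{\cX,p}(Y^n,Y^{n+1})]$ by $\int_0^1 |\Y'|_p^p(r)\,\d r < \infty$.

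The main obstacle is the interpolation step: constructing an absolutely continuous (ideally piecewise geodesic) curve in $(\FP_p,\Ac\Wc_p)$ through the prescribed discrete sequence requires some geodesic-like structure on the ambient space. When $\cX$ is geodesic, \Cref{lem:Benamou_Brenier_1} directly supplies constant-speed geodesics between successive $\X^n$ and $\X^{n+1}$ and the plan goes through without modification. In the general case one must either verify that $(\FP_p,\Ac\Wc_p)$ is itself a length space, or bypass the continuous-curve framework and construct the common space directly by concatenating optimal bicausal couplings $\pi^n \in \cplbc(\X^n,\X^{n+1})$ into a multicausal coupling via \cite[Lemma A.5]{AcKrPa23a}, extracting $Y^\infty$ as the almost sure limit provided by the Cauchy argument in the \emph{only if} direction.
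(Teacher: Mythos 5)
Your proof takes the same route as the paper: the forward implication constructs an $AC^p(\FP_p)$ curve visiting $\X^n$ at $u_n = \sum_{k\le n} b_k$ and invokes \Cref{prop:AC_curve_in_FP}; the converse uses the identity coupling and an $L^p$-Cauchy argument. (Your ``if''/``only if'' labels are swapped relative to the statement, but the content is correct.) You actually supply more detail than the paper in two places: the H\"older/Cauchy argument, which the paper dismisses as ``easy to verify'', and the derivation of finite weighted $p$-variation in $L^p(\P;\Xc)$ of the resulting $(Y^n)$ from \eqref{eqn:lisini_derivative_FP}, which the paper leaves implicit in ``the statement then follows by invoking \Cref{prop:AC_curve_in_FP}''. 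Your concern about the interpolation step is well-founded: the paper's phrase ``displacement interpolation as in the proof of \Cref{prop:AC_curve_in_FP}'' is imprecise, since that proof actually works with piecewise-constant approximants $\sigma_N$, which would not produce an absolutely continuous curve through the prescribed $\X^n$ here. Filling the gaps with constant-speed geodesics, as you propose, implicitly requires $(\FP_p,\Ac\Wc_p)$ to be geodesic, which by \cite[Theorem 5.10]{BaBePa21} holds when $\Xc$ is geodesic; your alternative of bypassing the curve framework altogether, concatenating optimal bicausal couplings via \cite[Lemma~A.5]{AcKrPa23a} and extracting $Y^\infty$ from the $L^p$-Cauchy argument, is indeed a cleaner way to sidestep this implicit structural assumption.
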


\begin{remark}
\begin{enumerate}[label=(\roman*)]
\item Here, $L^p(\P;\Xc)$ denotes the space of all random variables $Y:\Omega \rightarrow \Xc$ with $\E^\P\big[ d^p_{\cX,p}(Y,x_0)\big]<\infty$ for some (and thus all) $x_0 \in \Xc$ endowed with the metric
    \[
        d_{L^p(\P;\cX)}(Y,Y^\prime) \coloneqq \big(\E^\P[d^p_{\cX,p}(Y,Y^\prime)]\big)^{1/p}.
    \]
    \item More specifically, $(Y^n)_{n \in \N}$ having finite weighted $p$-variation in $L^p(\P;\Xc)$ means that there exist positive numbers $(b_n)_{n \in \N}$ such that
    \[
        \sum_{n = 1}^\infty b_n = 1 \quad \textnormal{and} \quad \sum_{n = 1}^\infty \frac{1}{b^{p-1}_n} \E^\P\big[ d^p_{\cX,p}(Y^n,Y^{n+1})\big] < \infty,
    \]
    which, in particular, implies
    \[
      \sum_{n = 1}^\infty \frac{1}{b^{p-1}_n} d^p_{\cX,p}(Y^n,Y^{n+1}) < \infty, \; \textnormal{$\P$--a.s.}
    \]
\end{enumerate}
\end{remark}

\begin{proof} Let us first prove the ``only if'' implication. Set $u_0=0$ and $u_n=\sum_{i=1}^n b_n$ for $n \in \N,$ and note that $u_n \in [0,1)$ for every $n \in \N\cup\{0\}$. We then define a curve in $\FP_p$ as follows: We set $\tilde \X^{u_n}\coloneqq \X^n$ for $n \in \N$, and for $u \in (u_i,u_{i+1})$ we define $\tilde \X^u$ using the displacement interpolation as in the proof of \Cref{prop:AC_curve_in_FP}. This defines the curve $(\tilde \X^u)_{u \in [0,1)}.$ Finally, we set $\tilde \X^1=\X^\infty.$ As $(\X^n)_{n \in \N}$ has finite weighted $p$-variation, it follows from direct calculations that $(\tilde \X^u)_{u \in [0,1]}$ is a curve in $AC^p(\FP_p).$ The statement then follows  by invoking \Cref{prop:AC_curve_in_FP}.

The ``if'' implication is immediate. Assume that there exists a filtered probability space $(\Omega,\cF,(\cF_t)_{t=1}^T,\P)$ and a family of process $Y^n = (Y^n_t)_{t=1}^T : \Omega \rightarrow \cX$, $n \in \N \cup \{\infty\},$ such that the filtered process
    \begin{equation*}
        \Y^n = (\Omega,\cF,(\cF_t)_{t=1}^T,\P, (Y^n_t)_{t=1}^T)
    \end{equation*}
    satisfies $\cA\cW_p(\Y^n,\X^n) = 0$ for every $n \in \N \cup \{\infty\}$, the sequence $(Y^n)_{n \in \N}$ converges $\P$--a.s.\ to $Y^\infty$ and has finite weighted $p$-variation in $L^p(\P;\Xc)$. Then, 
\[\Ac\Wc_p^p(\X^n,\X^{m})=\Ac\Wc_p^p(\Y^n,\Y^{m})\leq \E^{\P}\big[d_{\Xc,p}^p(Y^n,Y^{m})\big],\quad m,n \in \N\cup\{\infty\},\] which immediately gives that the weighted $p$-variation of $(\X^n)_{n \in\N}$ is finite. Moreover, it is easy to verify that $Y^n \rightarrow Y^\infty$ in $L^p(\P;\Xc)$, which gives
\[\Ac\Wc_p^p(\X^n,\X^\infty)\leq \E^{\P} [d_{\Xc,p}^p(Y^n,Y^\infty)]\xrightarrow{n \rightarrow \infty} 0.\]
The proof is thus concluded.
\end{proof}

As a corollary, we obtain a characterization of convergent sequences in the adapted Wasserstein space.

\begin{corollary} Let $(\X^n)_{n \in \N \cup \{\infty\}}$ be a family in $\FP_p.$ Then, $\lim_{n \rightarrow \infty} \Ac\Wc_p(\X^n,\X^\infty)=0$ holds if and only if every subsequence of $(\X^n)_{n \in \N \cup \{\infty\}}$ contains a further subsequence, say $(\X^{n_l})_{l \in \N},$ such that $(\X^{n_l})_{l \in \N} \cup \{\X^\infty\}$ admits a Skorokhod representation.
\end{corollary}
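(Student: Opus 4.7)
The plan is to combine \Cref{prop:Skoro_repr} with the standard Urysohn-type subsequence principle for convergence in a metric space: the ``only if'' direction extracts a subsequence that decays fast enough to trigger the finite weighted $p$-variation hypothesis, while the ``if'' direction proceeds by contrapositive, using that any Skorokhod representation automatically forces $\Ac\Wc_p$-convergence via an $L^p$-upgrade of the almost sure convergence.

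For the ``only if'' direction, assuming $\Ac\Wc_p(\X^n,\X^\infty)\to 0$, I would take an arbitrary subsequence and extract from it a further subsequence $(\X^{n_l})_{l\in\N}$ with $\Ac\Wc_p(\X^{n_l},\X^\infty)\le 2^{-l}$. The triangle inequality then gives $\sum_l \Ac\Wc_p(\X^{n_l},\X^{n_{l+1}})<\infty$, so condition \eqref{eqn:summable_diff} holds and the remark following the definition of finite weighted $p$-variation shows that $(\X^{n_l})_l$ has finite weighted $p$-variation; it also converges to $\X^\infty$ by construction. \Cref{prop:Skoro_repr} then delivers the desired Skorokhod representation of $(\X^{n_l})_{l\in\N}\cup\{\X^\infty\}$.

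For the ``if'' direction, I would argue by contrapositive. If $\Ac\Wc_p(\X^n,\X^\infty)\not\to 0$, fix $\varepsilon>0$ and a subsequence $(\X^{n_k})_k$ with $\Ac\Wc_p(\X^{n_k},\X^\infty)\ge\varepsilon$. By hypothesis a further subsequence $(\X^{n_{k_l}})_l\cup\{\X^\infty\}$ admits a Skorokhod representation $(\Y^{n_{k_l}})_l\cup\{\Y^\infty\}$ on a common filtered space $(\Omega,\Fc,(\Fc_t)_{t=1}^T,\P)$, with $Y^{n_{k_l}}\to Y^\infty$ $\P$--a.s.\ and $(Y^{n_{k_l}})_l$ of finite weighted $p$-variation in $L^p(\P;\Xc)$ with weights $(b_l)_l$ summing to $1$. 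H\"older's inequality with exponents $p$ and $p/(p-1)$ yields
\[
\sum_l d_{L^p(\P;\Xc)}(Y^{n_{k_l}},Y^{n_{k_{l+1}}})
\le \Big(\sum_l b_l\Big)^{(p-1)/p}\Big(\sum_l \tfrac{1}{b_l^{p-1}}\E^\P\big[d_{\Xc,p}(Y^{n_{k_l}},Y^{n_{k_{l+1}}})^p\big]\Big)^{1/p}<\infty,
\]
so $(Y^{n_{k_l}})_l$ is Cauchy in $L^p(\P;\Xc)$ and, by uniqueness, $L^p$-converges to its a.s.\ limit $Y^\infty$. Since $(\mathrm{id},\mathrm{id})_\#\P$ is a bicausal coupling between $\Y^{n_{k_l}}$ and $\Y^\infty$, one obtains $\Ac\Wc_p(\X^{n_{k_l}},\X^\infty)=\Ac\Wc_p(\Y^{n_{k_l}},\Y^\infty)\le d_{L^p(\P;\Xc)}(Y^{n_{k_l}},Y^\infty)\to 0$, contradicting the choice of $(\X^{n_k})_k$. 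The only genuinely non-routine step is this H\"older-based upgrade from finite weighted $p$-variation in $L^p$ plus a.s.\ convergence to $L^p$-convergence; the rest is a direct application of \Cref{prop:Skoro_repr} together with the subsequential characterization of convergence in a metric space.
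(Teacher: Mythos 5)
Your proposal is correct and follows essentially the same approach as the paper: extract a fast subsequence satisfying \eqref{eqn:summable_diff} for the forward implication, and argue by contrapositive for the converse. The only difference is that in the ``if'' direction you re-derive part of \Cref{prop:Skoro_repr} (the H\"older-based upgrade to $L^p(\P;\Xc)$-convergence) rather than citing it outright, but that is the same argument the paper uses internally to prove \Cref{prop:Skoro_repr}, so the routes coincide.
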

\begin{proof} Let $(\X^n)_{n \in \N \cup \{\infty\}}$be such that $\lim_{n \rightarrow \infty} \Ac\Wc_p(\X^n,\X^\infty)=0.$ From every subsequence, we can select a further subsequence for which \eqref{eqn:summable_diff} holds, so that \Cref{prop:Skoro_repr} can be applied.

Conversely, assume that $\lim_{n \rightarrow \infty} \Ac\Wc_p(\X^n,\X^\infty)=0$ does not hold. Then, we can find a subsequence, say $(\X^{n_l})_{l \in \N}$, and $\varepsilon>0$, such that $\Ac\Wc_p(\X^{n_l},\X^\infty)>\varepsilon$ for every $l \in \N.$ It follows that $(\X^{n_l})_{l \in \N}$ cannot contain a subsequence that admits a Skorokhod representation.
\end{proof}

\section{Proof of \texorpdfstring{\Cref{prop:AC_curve_in_FP}}{}} \label{sec:proof_repres}

\begin{proof}[Proof of \Cref{prop:AC_curve_in_FP}]
    The main strategy of the proof is defining a sequence of filtered processes derived from a piece-wise interpolation of the absolutely continuous curve. This sequence of finer and finer approximations then turns out to be precompact in the space of filtered processes $\FP_p$. Subsequently, we show that a transformation of an accumulation point of this sequence satisfies the desired properties.
    
    We start with the interpolation of the curve. Let
    \begin{equation*}
        \X^u = (\Omega^u,\cF^u,(\cF^u_t)_{t = 1}^T,\P^u,(X^u_t)_{t = 1}^T), \; u \in [0,1].
    \end{equation*}
    We suppose without loss of generality that each $\Omega^u$ is a Polish space, as otherwise we would replace every $\X^u$ with its canonical representative, see \Cref{sec::canonical_filtered_process}.
    For $N \in \N$, let
    \begin{equation*}
        \Omega^{[N]} \coloneqq  \prod_{i = 0}^{2^N} \Omega^{i/2^N}= \Omega^{0} \times \Omega^{1/2^N} \times \cdots \times \Omega^{(2^N-1)/2^N} \times \Omega^{1}.
    \end{equation*}
    We denote by $\pi^{i/2^N}: \Omega^{[N]}\rightarrow \Omega^{i/2^N}$ the projection and by $\gamma^i_N \in \cplbc(\X^{i/2^N},\X^{(i+1)/2^N})$ an optimal bicausal coupling for the transport problem $\Ac\Wc_p(\X^{i/2^N},\X^{(i+1)/2^N}).$\footnote{Given that we are working with the canonical representative, the existence of an optimal bicausal coupling can be deduced from \cite[Lemma A.3]{eckstein2023computational} via the standard construction.} Using \cite[Lemma A.5]{AcKrPa23a}, there exists $\gamma_N \in \cP(\Omega^{[N]})$ such that 
    \[
        \gamma_N \in \cplmc(\X^{0},\X^{1/2^N},\ldots,\X^{1}) \; \textnormal{and} \; (\pi^{i/2^N},\pi^{(i+1)/2^N})_{\#}\gamma_N = \gamma^i_N
    \]
    for each $i \in \{0,1,\ldots,2^N-1\}$. Let 
    \[
        \cF^{[N]} = \otimes_{i = 0}^{2^N}\cF^{i/2^N}, \; \cF^{[N]}_t = \otimes_{i = 0}^{2^N}\cF_t^{i/2^N}, \; t = 1,\ldots,T,
    \]
    and let
    \[
        \sigma_N:\Omega^{[N]} \longrightarrow L^p(\cX)
    \]
    be such that for $\omega \in \Omega^{[N]}$, the map $\sigma_N(\omega) : [0,1] \longrightarrow \cX$ is given by
    \begin{equation*}
        \sigma^u_N(\omega) \coloneqq
        \begin{cases}
            \big(X^{1}_1(\pi^{1}(\omega)),\ldots,X^{1}_T(\pi^{1}(\omega))\big), & \textnormal{if $u = 1$}, \\[0.5em]
            \big(X^{i/2^N}_1(\pi^{i/2^N}(\omega)),\ldots,X^{i/2^N}_T(\pi^{i/2^N}(\omega))\big), & \textnormal{if $u \in [i/2^N,(i+1)/2^N)$}.
        \end{cases}
    \end{equation*}
    We can also see $\sigma_N$ as a process $\sigma_N=(\sigma_{N,t})_{t=1}^T : \Omega^{[N]} \rightarrow \prod_{t=1}^T L^p(\Xc_t)$ with
    \begin{align*}
        \sigma^u_{N,t}(\omega) \coloneqq 
        \begin{cases}
            X^1_t(\pi^1(\omega)), & \textnormal{if $u = 1$}, \\[0.5em]
            X^{i/2^N}_t\big(\pi^{i/2^N}(\omega)\big), & \textnormal{if $u \in [i/2^N,(i+1)/2^N)$.}
        \end{cases}
    \end{align*} 
    We then consider the sequence of filtered processes
    \begin{align*}
        \Y^N &\coloneqq(\Omega^{[N]},\cF^{[N]},(\cF^{[N]}_t)_{t = 1}^T,\gamma_N,(\sigma^N_t)_{t = 1}^T) \in \FP( \Pi_{t = 1}^TL^p(\cX_t)), \quad N \in \N, \\
        \Y^{N,u} &\coloneqq(\Omega^{[N]},\cF^{[N]},(\cF^{[N]}_t)_{t = 1}^T,\gamma_N, Y^{N,u}) \in \FP(\cX), \quad N\in \N,\; u \in [0,1],
    \end{align*} 
    where $Y^{N,u}\coloneqq e^u \circ \sigma_{N}$, with $e^u(f) = f(u)$ denoting the evaluation of a map $f:[0,1] \rightarrow \cX$ at $u \in [0,1]$. Note that  $\gamma_N \in \cplmc(\X^{0},\X^{1/2^N},\ldots,\X^{1})$ implies  $(\pi^{i/2^N},{\rm id})_{\#}\gamma_N \in \cplbc(\X^{i/2^N},\Y^{N,i/2^N})$.\footnote{Although the argument is tedious, this follows from the definition of multicausality when considering both $\X^{i/2^N}$ and $\Y^{N,i/2^N}$ as defined on $\Omega^{[N]}$.} Consequently,
    \begin{align}
        \Ac\Wc_p^p(\X^{i/2^N},\Y^{N,i/2^N})
        &\leq  \int d^p_{\cX,p}\big(X^{i/2^N}\big(\pi^{i/2^N}(\omega)\big),Y^{N,i/2^N}(\omega)\big) \gamma_N(\mathrm{d}\omega) \nonumber \\
        &=  \int d^p_{\cX,p}\big(X^{i/2^N}\big(\pi^{i/2^N}(\omega)\big),e^{i/2^N} \circ \sigma^N(\omega)\big) \gamma_N(\mathrm{d}\omega) \nonumber \\
        &= \int d^p_{\cX,p}(e^{i/2^N} \circ \sigma^N(\omega),e^{i/2^N} \circ \sigma^N(\omega)) \gamma_N(\mathrm{d}\omega) \nonumber \\
        &=0 \label{eqn:AW_is_0}
    \end{align}
    holds for each $i \in \{1,\ldots,2^N\}$.
    
    We now show that the collection of filtered processes $(\Y^N)_{N \in \N}$, or more precisely, of the corresponding equivalence classes, is precompact in $\FP( L^p(\Xc))$ with respect to the adapted weak topology. To do this, we identify the isometric metric spaces $\Pi_{t = 1}^T L^p(\cX_t)$ and $L^p(\cX)$.\footnote{We endow $\Pi_{t = 1}^T L^p(\cX_t)$ with the $p$ product metric.} By \Cref{lem::relatively_compact} and Prokhorov's theorem,\footnote{Here, we are interested in relative compactness with respect to the adapted weak topology, that is, the topology generated by replacing each $d_{\cX_\smalltext{t}}$ with $d_{\cX_\smalltext{t}} \land 1$. Uniform $p$-integrability, as additionally required in \Cref{lem::relatively_compact}, is then automatically satisfied.} this is equivalent to showing tightness of the family of probability measures
    \begin{equation*}
        \eta_N \coloneqq (\sigma_N)_\# \gamma_N \in \cP(L^p(\cX)), \; N \in \N.
    \end{equation*}
    To simplify the notation, we write $X^{i/2^N}(\omega)$ instead of $X^{i/2^N}(\pi^{i/2^N}(\omega))$ in what follows, and thus identify the random variable $X^{i/2^N}$ on $\Omega^{i/2^N}$ with the random variable $X^{i/2^N} \circ \pi^{i/2^N}$ on $\Omega^{[N]}$, whenever no confusion arises.

    \medskip {\bf Step 1: Tightness.} Since $u \mapsto \X^u$ is continuous with respect to $\Ac\Wc_p$ in $\FP_p$, it follows from the definition of the adapted Wasserstein distance that the map $u \mapsto \law_{\P^u} (X^u) \eqqcolon \mu^u$ is continuous with respect to weak topology on $\Pc(\Xc).$ Thus $\mathscr{F}\coloneqq\{ \mu^u : u \in [0,1] \}$ is compact in $\cP(\Xc)$ and therefore tight by Prokhorov's theorem. In particular, $\mathscr{F}$ is bounded, that is, for a fixed point $\bar{x} \in \Xc$, there exists $C_1>0$ such that
    \[ 
        \int_{\Xc} d^p_{\cX,p}(x,\bar{x}) \mu^u(\mathrm{d}x)=\Wc_p^p(\mu^u,\delta_{\bar{x}})\leq C_1.
    \] 
    Moreover (see \cite[Ch.~8, Ex.~8.6.5, p.~205]{bogachev2007measure}), since $\mathscr{F}$ is tight, there exists a function $\psi: \Xc \rightarrow [0,\infty]$ whose sublevels $\{x \in \Xc \,|\, \psi(x) \leq c \}$ are compact in $\Xc$ for any $c \in [0,\infty)$ and such that
    \[ 
        C_2\coloneqq \sup_{u \in [0,1]} \int \psi(x) \mu^u(\mathrm{d}x)<\infty.
    \] 
    We fix $\bar{x} \in \cX$ and, for $f \in L^p(\Xc)$, we set
    \[ 
        \Phi(f)\coloneqq\int_0^1 d^p_{\cX,p}(f(u),\bar{x})\mathrm{d}u+\int_0^1 \psi(f(u))\mathrm{d}u+\sup_{0<h<1} \int_0^{1-h}\frac{d^p_{\cX,p}(f(u+h),f(u))}{h}\mathrm{d}u.
    \] 
    
    Then, by an application of Tonelli's theorem,
    \begin{align*}
        \int_{L^p(\Xc)}& \int_0^1 \big(d^p_{\cX,p}(f(u),\bar{x})+\psi(f(u))\big) \mathrm{d}u\,\eta_N(\mathrm{d}f) \\
        &=\int_0^1 \int_{\Omega^{[N]}} \big(d^p_{\cX,p}(\sigma^u_N(\omega),\bar{x})+\psi(\sigma^u_N(\omega))\big) \gamma_N(\mathrm{d}\omega)\mathrm{d}u\nonumber\\
        &= \sum_{i=0}^{2^N-1} \int_{i/2^N}^{(i+1)/2^N} \int_{\Xc} (d^p_{\cX,p}(x,\bar{x})+\psi(x)) \mu^{i/2^N}(\mathrm{d}x)\mathrm{d}u \nonumber\\
        &= \frac{1}{2^N} \sum_{i=0}^{2^N-1} \int_{\Xc} ( d^p_{\cX,p}(x,\bar{x})+\psi(x)) \mu^{i/2^N}(\mathrm{d}x) \nonumber\\
        &\leq \frac{1}{2^N} \sum_{i=0}^{2^N-1} (C_1+C_2) \nonumber\\
        &=C_1+C_2.
    \end{align*}
Furthermore, if $h < 1/2^N,$ then
    \begin{align*} 
        \int_0^{1-h} d^p_{\cX,p}(\sigma^{u+h}_N(\omega),\sigma^{u}_N(\omega)) \mathrm{d}u&=\sum_{i=0}^{2^N-2} \int_{i/2^N}^{(i+1)/2^N} d^p_{\cX,p}(\sigma^{u+h}_N(\omega),\sigma^{u}_N(\omega)) \mathrm{d}u \nonumber\\
        &=h \sum_{i=1}^{2^N-2} d^p_{\cX,p}\big(X^{i/2^N}(\omega),X^{(i+1)/2^N}(\omega)\big),
    \end{align*} 
    because $\sigma^{u+h}_N(\omega)=\sigma^{u}_N(\omega)$ for $u\in [i/2^N,(i+1)/2^N-h).$ In case $1/2^N \leq h <1$, we can find $k \in \N$ such that $k/2^N\leq h < (k+1)/2^N.$ Then, using the triangle inequality, we obtain 
    \[ 
    d_{\cX,p}(\sigma^{u+h}_N(\omega),\sigma^{u}_N(\omega))\leq \sum_{i=0}^kd_{\cX,p}\big(\sigma^{u+(i+1)/2^N}_N(\omega),\sigma^{u+i/2^N}_N(\omega)\big),
    \] 
    which by H\"older inequality gives
    \[ d^p_{\cX,p}(\sigma^{u+h}_N(\omega),\sigma^{u}_N(\omega))\leq (k+1)^{p-1} \sum_{i=0}^kd^p_{\cX,p}\big(\sigma^{u+(i+1)/2^N}_N(\omega),\sigma^{u+i/2^N}_N(\omega)\big).\] Consequently,
    \begin{align}
        \int_0^{1-h} &d^p_{\cX,p}(\sigma^{u+h}_N(\omega),\sigma^{u}_N(\omega)) \mathrm{d}u \nonumber\\
        &\leq \int_0^{1-k/2^N} d^p_{\cX,p}(\sigma^{u+h}_N(\omega),\sigma^{u}_N(\omega)) \mathrm{d}u \nonumber \\
        &\leq \int_0^{1-k/2^N} (k+1)^{p-1} \sum_{i=0}^kd^p_{\cX,p}(\sigma^{u+(i+1)/2^N}_N(\omega),\sigma^{u+i/2^N}_N(\omega)) \mathrm{d}u \nonumber \\
        &\leq (k+1)^{p-1} \sum_{i=0}^k \frac{1}{2^N} \sum_{j=0}^{2^N-k-1} d^p_{\cX,p}(X^{(i+j+1)/2^N}(\omega),X^{(i+j)/2^N}(\omega)) \label{eqn:tightness_Lisini1} \\
        &\leq \frac{(k+1)^{p}}{2^N} \sum_{j=0}^{2^N-1}d^p_{\cX,p}(X^{(j+1)/2^N}(\omega),X^{j/2^N}(\omega)), \label{eq:concentrated_on_w_1_p}
    \end{align} where the last inequality follows from the fact that, in \eqref{eqn:tightness_Lisini1}, the term \linebreak $d^p_{\cX,p}(X^{(i+j+1)/2^N}(\omega),X^{(i+j)/2^N}(\omega))$ is counted at most $k+1$ times. The inequality $k/2^N\leq h$ further gives 
    \begin{equation}\label{eqn:lisini_bound_for_w_1_p}
    \int_0^{1-h} d^p_{\cX,p}(\sigma^{u+h}_N(\omega),\sigma^{u}_N(\omega)) \mathrm{d}u \leq h \frac{(k+1)^p}{k} \sum_{j=0}^{2^N-1}d^p_{\cX,p}(X^{(j+1)/2^N}(\omega),X^{j/2^N}(\omega)).
    \end{equation}
    We note that the function $a \mapsto (a+1)^p/a$ is decreasing on $(0,1/(p-1))$ and increasing on $(1/(p-1),\infty).$ Thus, since $1 \leq k \leq 2^N-1,$ we have
    \[ \frac{(k+1)^p}{k}\leq \begin{cases} 2^p, & \text{if } k \leq 1/(p-1), \\ \frac{2^{Np}}{2^N-1}, & \text{if } k > 1/(p-1).  \end{cases}\]
    This together with the bound \eqref{eqn:lisini_bound_for_w_1_p} yields
    \begin{multline} \label{eqn:lisini_bound2}
        \sup_{h \in (0,1)} \int_0^{1-h} \frac{d^p_{\cX,p}(\sigma^{u+h}_N(\omega),\sigma^{u}_N(\omega))}{h} \mathrm{d}u \\
        \leq \Big(2^p+ \frac{2^{Np}}{2^N-1} \Big) \sum_{j=0}^{2^N-1}d^p_{\cX,p}(X^{(j+1)/2^N}(\omega),X^{j/2^N}(\omega)).
    \end{multline}
From \begin{align}\label{eqn:lisini_int_gamma_n_bound}
        \int_{\Omega^{[N]}} \sum_{i=0}^{2^N-1} d^p_{\cX,p}(X^{i/2^N}(\omega),X^{(i+1)/2^N}(\omega)) \gamma_N(\mathrm{d}\omega) 
        &=\sum_{i=0}^{2^N-1} \Ac\Wc_p^p(\X^{i/2^N},\X^{(i+1)/2^N}) \nonumber\\
        &\leq \sum_{i=0}^{2^N-1} \bigg( \int_{i/2^N}^{(i+1)/2^N} |\X^\prime|(u) \mathrm{d}u \bigg)^p \nonumber\\
        &\leq \frac{1}{2^{N(p-1)}} \sum_{i=0}^{2^N-1} \int_{i/2^N}^{(i+1)/2^N} |\X^\prime|^p(u) \mathrm{d}u  \nonumber\\
        &\leq \frac{1}{2^{N(p-1)}} \int_0^1 |\X^\prime|^p(u) \mathrm{d}u,
    \end{align} 
  together with \eqref{eqn:lisini_bound2}, we find that 
    \[ \int_{L^p(\Xc)} \sup_{h \in (0,1)} \int_0^{1-h} \frac{d^p_{\cX,p}(f(u+h),f(u))}{h} \mathrm{d}u \,\eta_N(\mathrm{d}f) \leq (2^p+2) \int_0^1 |\X^\prime|^p(u) \mathrm{d}u.  \]
    Because the curve $u \mapsto \X^u$ has finite $p$-energy, the right-hand side is finite, and we conclude that
    \begin{multline*}
    \sup_{N \in \N}\bigg\{\int_{L^p(\Xc)} \bigg( \int_0^1 d^p_{\cX,p}(f(u),\bar{x})+\psi(f(u)) \mathrm{d}u\\
    +\sup_{h \in (0,1)} \int_0^{1-h} \frac{d^p_{\cX,p}(f(u+h),f(u))}{h} \mathrm{d}u\bigg) \eta_N(\mathrm{d}f)\bigg\}
    < \infty.
    \end{multline*}
  Further, the function
  \begin{align*} L^p(\Xc)  &\longrightarrow [0,\infty] \\
  f &\longmapsto \int_0^1 d^p_{\cX,p}(f(u),\bar{x})
  +\psi(f(u))\mathrm{d}u+\sup_{h \in (0,1)} \int_0^{1-h} \frac{d^p_{\cX,p}(f(u+h),f(u))}{h} \mathrm{d}u
  \end{align*}
   has compact sublevel sets; see \cite[Theorem 2]{lisini2007characterization} and the references therein. Thus, using \cite[Example 8.6.5]{bogachev2007measure}, we have that the sequence $(\eta_N)_{N \in \N}$ is tight in $\cP(L^p(\Xc)).$ Consequently, the collection $(\Y^N)_{N \in \N}$ is precompact in $\FP(L^p(\Xc))$ with respect to the adapted weak topology and thus admits a cluster point 
    \[ \Y=( \Omega, \Fc,(\Fc_t)_{t=1}^T,\P, \sigma ).\] We write $\eta\coloneqq \law_{\P} (\sigma) \in \cP(L^p(\Xc))$ for the law of $\sigma$ under $\P$. It is straightforward to check that $\eta$ is necessarily a cluster point of the sequence $(\eta_N)_{N \in \N}.$

    \medskip
    {\bf Step 2: $\eta$ is concentrated on $W^{1,p}(\cX)$.} Note that it suffices to show that
    \begin{equation}\label{eq:etaconc}
        \sup_{h \in (0,1)} \int_0^{1-h}\frac{d^p_{\cX,p}(f(u+h),f(u))}{h^p}\d u < \infty \;\, \text{for $\eta$--a.e.~$f \in L^p(\cX)$}.
    \end{equation}
    First note that the functions $\fg_N : L^p(\cX) \longrightarrow [0,\infty]$, $N \in \N$, defined by
    \begin{equation*}
        \fg_N(f) \coloneqq \sup_{h \in [1/2^N, 1)} \int_0^{1-h} \frac{d^p_{\cX,p}(f(u+h),f(u))}{h^p}\d u,
    \end{equation*}
    satisfy $\fg_N(f) \leq \fg_{N+1}(f)$ and 
    \begin{equation*}
        \sup_{N \in \N} \fg_N(f) = \sup_{h \in (0,1)} \int_0^{1-h}\frac{d^p_{\cX,p}(f(u+h),f(u))}{h^p}\d u.
    \end{equation*}
    Monotone convergence then yields
    \begin{equation*}
        \int_{L^p(\Xc)} \sup_{N \in \N}\fg_N(f)\eta(\d f) 
        = \sup_{N \in \N} \int_{L^p(\Xc)}  \fg_N(f)\eta(\d f).
    \end{equation*}
    For \eqref{eq:etaconc} to hold, we it is enough to find a constant $C \in [0,\infty)$ such that   \begin{equation}\label{eqn:lisini_reduction_concentrated_w_1_p}
        \int_{L^p(\Xc)}  \fg_N(f)\eta(\d f) \leq C, \; N \in \N.
    \end{equation}    
    Fix $N \in \N$, $h \in [1/2^N, 1)$ and $k \in \N$ such that $h \in [k/2^N, (k+1)/2^N)$. Then \eqref{eq:concentrated_on_w_1_p} together with $1/2^N \leq 2^{N(p-1)}h^p/k^p$  implies
    \begin{align*}
        \int_0^{1-h} d^p_{\cX,p}(\sigma^{u+h}_N(\omega),\sigma^{u}_N(\omega)) \mathrm{d}u 
        &\leq \frac{(k+1)^{p}}{2^N} \sum_{j=0}^{2^N-1}d^p_{\cX,p}(X^{(j+1)/2^N}(\omega),X^{j/2^N}(\omega)) \\
        &\leq h^p\frac{(k+1)^p}{k^p}2^{N(p-1)}\sum_{j = 0}^{2^N-1}d^p_{\cX,p}(X^{(j+1)/2^N}(\omega),X^{j/2^N}(\omega)) \\
        &\leq h^p 2^p 2^{N(p-1)}\sum_{j = 0}^{2^N-1}d^p_{\cX,p}(X^{(j+1)/2^N}(\omega),X^{j/2^N}(\omega)),
    \end{align*}
    where the last inequality follows from $(k+1)^p /k^p \leq 2^p$. A rearrangement of the terms then yields
    \begin{multline*}
        \sup_{h \in [1/2^N,1)}\int_0^{1-h} \frac{d^p_{\cX,p}(\sigma^{u+h}_N(\omega),\sigma^{u}_N(\omega))}{h^p} \mathrm{d}u 
        \\
        \leq 2^p 2^{N(p-1)}\sum_{j = 0}^{2^N-1}d^p_{\cX,p}(X^{(j+1)/2^N}(\omega),X^{j/2^N}(\omega)).
    \end{multline*}
    We now integrate both sides with respect to $\gamma_N$ and obtain
    \begin{align*}
        \int_{L^p(\Xc)}  \fg_N(f)\eta_N(\d f) 
        &\leq 2^p 2^{N(p-1)}\sum_{j = 0}^{2^N-1}\int_{\Omega^{[N]}} d^p_{\cX,p}(X^{(j+1)/2^N}(\omega),X^{j/2^N}(\omega)) \gamma_N(\d\omega) \\
        &\leq 2^p \int_0^1 |\X^\prime|^p(u) \mathrm{d}u \eqqcolon C,
    \end{align*}
    where we used \eqref{eqn:lisini_int_gamma_n_bound} in the second inequality. Since $(\fg_N)_{N \in \N}$ is a point-wise non-decreasing sequence of functions, it follows that
    \begin{equation}\label{eq:gNk}
        \int_{L^p(\Xc)} \fg_N(f)\eta_{N+k}(\d f) \leq \int_{L^p(\Xc)} \fg_{N+k}(f)\eta_{N+k}(\d f) \leq C, \;\, k,N \in \N.
    \end{equation}
    Additionally, each $\fg_N$ is lower semi-continuous, since, for each sequence $(f_n)_{n \in \N} \subseteq L^p(\cX)$ converging to $f \in L^p(\cX)$ with respect to $d_{L^p}$, we have 
    \begin{align*}
        \big(\fg_N(f)\big)^{1/p} &=\bigg( \sup_{h \in [1/2^N,1)} \int_0^{1-h} \frac{d^p_{\cX,p}(f(u+h),f(u))}{h^p}\d u \bigg)^{1/p} \\
        &\leq \bigg(\sup_{h \in [1/2^N,1)} \int_0^{1-h}\frac{d^p_{\cX,p}(f(u+h),f_n(u))}{h^p}\d u\bigg)^{1/p} \\
        &\quad + \bigg(\sup_{h \in [1/2^N,1)} \int_0^{1-h}\frac{d^p_{\cX,p}(f_n(u),f(u))}{h^p}\d u\bigg)^{1/p} \\
        &\leq \bigg(\sup_{h \in [1/2^N,1)} \int_0^{1-h}\frac{d^p_{\cX,p}(f(u+h),f_n(u))}{h^p}\d u\bigg)^{1/p} \\
        &\quad+ 2^{pN}\bigg(\sup_{h \in [1/2^N,1)} \int_0^{1-h}d^p_{\cX,p}(f_n(u),f(u))\d u\bigg)^{1/p} \\
        &\leq \bigg(\sup_{h \in [1/2^N,1)} \int_0^{1-h}\frac{d^p_{\cX,p}(f(u+h),f_n(u))}{h^p}\d u\bigg)^{1/p} + 2^{pN}\big(d^p_{L^p}(f_n,f)\big)^{1/p} \\
        &\leq \bigg(\sup_{h \in [1/2^N,1)} \int_0^{1-h}\frac{d^p_{\cX,p}(f(u+h),f_n(u+h))}{h^p}\d u\bigg)^{1/p} \\
        &\quad + \bigg(\sup_{h \in [1/2^N,1)} \int_0^{1-h}\frac{d^p_{\cX,p}(f_n(u+h),f_n(u))}{h^p}\d u\bigg)^{1/p} + 2^{pN}\big(d^p_{L^p}(f_n,f)\big)^{1/p} \\
        &= 2^{pN+1}\big( d^p_{L^p}(f_n,f)\big)^{1/p} + \big(\fg_N(f_n)\big)^{1/p}.
    \end{align*}
    Here, we used Minkowski's inequality in the second and and third-to-last inequalities.  Weak convergence of $\eta_{N+k}$ to $\eta$ as $k$ tends to infinity, together with lower semi-continuity of $\fg_N$ and \eqref{eq:gNk}, yields for every $N \in \N$:
    \begin{align*}
         \int_{L^p(\Xc)} \fg_N(f)\eta(\d f) 
         &\leq \liminf_{k \rightarrow\infty} \int_{L^p(\Xc)} \fg_{N}(f)\eta_{N+k}(\d f) 
         \\
         &\leq \liminf_{k \rightarrow\infty} \int_{L^p(\Xc)} \fg_{N+k}(f)\eta_{N+k}(\d f) \leq C.
    \end{align*}
    This implies \eqref{eqn:lisini_reduction_concentrated_w_1_p}, and thus $\eta$ is concentrated on $W^{1,p}(\cX)$.

    \medskip
    {\bf Step 3: Construction of $(\Y^u)_{u \in [0,1]}$ and $\tilde\eta$ concentrated on $AC(\cX)$.} Since $\sigma$ is $\P$--almost surely $W^{1,p}(\cX)$--valued, and since $W^{1,p}(\cX)$ is a Borel subset of $L^p(\cX)$, we suppose without loss of generality that $\sigma(\omega) \in W^{1,p}(\cX)$ for every $\omega \in \Omega$.\footnote{We would otherwise need to consider the filtered process restricted to a subset of $\Omega$.} We thus consider $\eta$ as a probability measure on the Borel-$\sigma$-algebra of $W^{1,p}(\cX)$. By \Cref{lem:sobolev_repres}, there exists a Borel-measurable map $T: W^{1,p}(\cX) \rightarrow C(\cX)$ mapping each equivalence class $[f]_{\sim} \in W^{1,p}(\cX)$ to its continuous representative $T([f]_{\sim}) \in C(\cX)$, that is, $f(u) = T([f]_{\sim})(u)$ for a.e.~$u \in [0,1]$. We then define 
    $Y^u : \Omega \rightarrow \cX$ by $Y^u(\omega)\coloneqq  (e^u\circ T \circ \sigma)(\omega)$ and write
    \[ 
        \Y^u\coloneqq( \Omega, \Fc,(\Fc_t)_{t=1}^T,\P, (Y^u_t)_{t = 1}^T ), \quad u \in [0,1].
    \]
    Then $\tilde\eta \coloneqq \law_{\P} ((Y^u)_{u \in [0,1]}) =  T_\# \eta \in \cP(C(\cX))$ is concentrated on $AC(\cX)$ since $T([f]_{\sim}) \in AC(\cX)$ for each $[f]_{\sim} \in W^{1,p}(\cX)$.

    \medskip {\bf Step 4: Proof of \eqref{eqn:lisini_derivative_FP}.} We only prove ``$\geq$'', as the converse inequality follows from \eqref{eqn:lisini_derivative_FP_leq}. By Tonelli's theorem and an application of Lebesgue's differentiation theorem, it suffices to show that, for each pair of dyadic numbers $0 < s_1 < s_2 < 1$, we have
    \begin{equation*}
        \int_{C(\cX)}\bigg(\int_{s_1}^{s_2} |f^\prime|(u)\d u\bigg) \tilde\eta(\d f) \leq \int_{s_1}^{s_2} |\X^\prime|(u)\d u.
    \end{equation*}
    We start by fixing dyadic numbers $s_1,s_2,h$ satisfying $0 < s_1 < s_2 < s_2 + h < 1$. We choose $N \in \N$ large enough such that $s_1 = s_{1,N}/2^N$, $s_2 = s_{2,N}/2^N$ and $h = h_N/2^N$ for some $s_{1,N},s_{2,N},h_N \in \N$. Then
    \begin{align*}
        &\int_{s_1}^{s_2} d^p_{\cX,p}(\sigma_N^{u+h}(\omega),\sigma_N^u(\omega))\d u \\
        &= \sum_{k = s_{1,N}}^{s_{2,N}-1}\int_{k/2^N}^{(k+1)/2^N} d^p_{\cX,p}(\sigma_N^{u+h}(\omega),\sigma_N^u(\omega))\d u \\
        &= \sum_{k = s_{1,N}}^{s_{2,N}-1} \int_{k/2^N}^{(k+1)/2^N} d^p_{\cX,p}(\sigma_N^{(k+h_N)/2^N}(\omega),\sigma_N^{k/2^N}(\omega))\d u \\
        &\leq\sum_{k = s_{1,N}}^{s_{2,N}-1} \bigg( \int_{k/2^N}^{(k+1)/2^N} \bigg( \sum_{j = 0}^{h_N-1} d_{\cX,p}(\sigma_N^{(k+h_N)/2^N}(\omega),\sigma_N^{k/2^N}(\omega))\bigg)^p\d u\bigg)  \\
        &\leq \sum_{k = s_{1,N}}^{s_{2,N}-1}\bigg( \sum_{j = 0}^{h_N-1} \bigg( \int_{k/2^N}^{(k+1)/2^N} d^p_{\cX,p}(\sigma_N^{(k+j+1)/2^N}(\omega),\sigma_N^{(k+j)/2^N}(\omega))\d u\bigg)^{1/p}\bigg)^p, \\
        &\leq h^{p-1}_N \sum_{k = s_{1,N}}^{s_{2,N}-1} \sum_{j = 0}^{h_N-1} \int_{k/2^N}^{(k+1)/2^N} d^p_{\cX,p}(\sigma_N^{(k+j+1)/2^N}(\omega),\sigma_N^{(k+j)/2^N}(\omega))\d u, \\
        &\leq \frac{h^{p-1}_N}{2^N} \sum_{k = s_{1,N}}^{s_{2,N}-1} \sum_{j = 0}^{h_N-1}  d^p_{\cX,p}(\sigma_N^{(k+j+1)/2^N}(\omega),\sigma_N^{(k+j)/2^N}(\omega)),
        \end{align*}
    where we used Minkowski's inequality in the third-to-last line, and Hölder's inequality in the second-to-last one. It follows that
    \begin{align*}
        \int_{s_1}^{s_2} d^p_{\cX,p}(\sigma_N^{u+h}(\omega),\sigma_N^u(\omega))\d u   
        &\leq \frac{h^{p-1}_N}{2^N} \sum_{j = 0}^{h_N-1}\sum_{k = s_{1,N}}^{s_{2,N}-1}  d^p_{\cX,p}(\sigma_N^{(k+j+1)/2^N}(\omega),\sigma_N^{(k+j)/2^N}(\omega))\\
        &\leq \frac{h^{p-1}_N}{2^N} \sum_{j = 0}^{h_N-1}\sum_{k = s_{1,N}}^{s_{2,N}+h_N-1}  d^p_{\cX,p}(\sigma_N^{(k+1)/2^N}(\omega),\sigma_N^{k/2^N}(\omega)) \\
        &\leq \frac{h^{p-1}_N h_N}{2^N} \sum_{k = s_{1,N}}^{s_{2,N}+h_N-1}  d^p_{\cX,p}(\sigma_N^{(k+1)/2^N}(\omega),\sigma_N^{k/2^N}(\omega)) \\
        &= h^p\frac{h^{p}_N}{2^N}\frac{2^{Np}}{h^p_N}\sum_{k = s_{1,N}}^{s_{2,N}+h_N-1}  d^p_{\cX,p}(\sigma_N^{(k+1)/2^N}(\omega),\sigma_N^{k/2^N}(\omega)) \\
        &= h^p2^{N(p-1)}\sum_{k = s_{1,N}}^{s_{2,N}+h_N-1}  d^p_{\cX,p}(\sigma_N^{(k+1)/2^N}(\omega),\sigma_N^{k/2^N}(\omega)).
    \end{align*}
    This in turn yields
    \begin{align*}
        \int_{s_1}^{s_2} \frac{d^p_{\cX,p}(\sigma_N^{u+h}(\omega) ,\sigma_N^u(\omega))}{h^p}  \d u 
        &\leq 2^{N(p-1)} \sum_{k = s_{1,N}}^{s_{2,N}+h_N-1} d^p_{\cX,p}(X^{(k+1)/2^N}(\omega),X^{k/2^N}(\omega)).
    \end{align*}
    Integrating with respect to $\gamma_N$ then gives
    \begin{align*}
        \int_{L^p(\cX)} \int_{s_1}^{s_2} &\frac{d^p_{\cX,p}(f(u+h),f(u))}{h^p}\d u\, \eta_N(\d f) \\
        &=\int_{\Omega^{[N]}} \int_{s_1}^{s_2} \frac{d^p_{\cX,p}(\sigma_N^{u+h}(\omega),\sigma_N^u(\omega))}{h^p}\d u \,\gamma_N(\d \omega) \\
        &\leq 2^{N(p-1)}  \sum_{k = s_{1,N}}^{s_{2,N}+h_N-1} \int_{\Omega^{[N]}} d^p_{\cX,p}(X^{(k+1)/2^N}(\omega),X^{k/2^N}(\omega)) \gamma_N(\d \omega) \\
        &=  2^{N(p-1)} \sum_{k = s_{1,N}}^{s_{2,N}+h_N-1} \cA\cW^p_p (\X^{(k+1)/2^N},\X^{k/2^N}) \\
        &\leq  2^{N(p-1)}  \sum_{k = s_{1,N}}^{s_{2,N}+h_N-1} \bigg( \int_{k/2^N}^{(k+1)/2^N}|\X^\prime|_p(u)\d u \bigg)^p \\
        &\leq \sum_{k = s_{1,N}}^{s_{2,N}+h_N-1} \int_{k/2^N}^{(k+1)/2^N}|\X^\prime|_p^p(u)\d u \\
        &= \int_{s_1}^{s_2 + h}|\X^\prime_p|^p(u)\d u .
    \end{align*}
    We now let $N$ tend to infinity, and find by lower semi-continuity of the integrand that
    \begin{align*}
        \int_{L^p(\cX)} \int_{s_1}^{s_2} \frac{d^p_{\cX,p}(f(u+h),f(u))}{h^p}\d u\, \eta(\d f) 
        \leq \int_{s_1}^{s_2+h} |\X^\prime|^p_p(u)\d u.
    \end{align*}
    Since 
    \begin{align*}
        \int_{L^p(\cX)} \int_{s_1}^{s_2} & \frac{d^p_{\cX,p}(f(u+h),f(u))}{h^p}\d u\, \eta(\d f) \\
        &= \int_{W^{1,p}(\cX)} \int_{s_1}^{s_2} \frac{d^p_{\cX,p}(f(u+h),f(u))}{h^p}\d u\, \eta(\d f) \\
        &= \int_{C(\cX)} \int_{s_1}^{s_2} \frac{d^p_{\cX,p}((Tf)(u+h),(Tf)(u))}{h^p}\d u\, \eta(\d f) \\
        &= \int_{C(\cX)} \int_{s_1}^{s_2} \frac{d^p_{\cX,p}(f(u+h),f(u))}{h^p}\d u\, \tilde\eta(\d f),
    \end{align*}
    and since $\tilde\eta$ is concentrated on $AC(\cX)$, we can let $h$ tend to zero along dyadics, and find by Fatou's lemma that
    \begin{align} \label{eqn:bound_energy}
        \int_{C(\cX)} \int_{s_1}^{s_2} |f^\prime|^p(u)\d u\, \tilde\eta(\d f) 
        &\leq \int_{s_1}^{s_2} |\X^\prime|^p_p(u)\d u.
    \end{align}

    \medskip 
    {\bf Step 5: $\Ac\Wc_p(\X^u,\Y^u)=0.$} In order to show $\Ac\Wc_p(\X^u,\Y^u)=0,$ we need to verify that $\E f(\X^u)=\E f(\Y^u)$ for every countinuous and bounded function $f$ on the canonical space of filtered processes; see \Cref{lem:inform_process}. We observe that the function $g: u \longmapsto \E [f(\X^u)]$ is uniformly continuous on $[0,1]$, and thus the piecewise constant approximations  
    \[
        g_N(u)\coloneqq \sum_{i=0}^{2^N-1} g(i/2^N) \1_{[i/2^N,i+1/2^N)}(u),\quad u \in [0,1],
    \]
    converge uniformly to $g$ as $N\rightarrow \infty.$ Consequently, for every $\alpha \in C_b(\R)$, we have
    \[
        \lim_{N \rightarrow \infty} \int_0^1 \alpha(u)g_N(u) \mathrm{d}u=\int_0^1 \alpha(u)g(u) \mathrm{d}u = \int_0^1 \alpha(u)\E [f(\X^u)] \mathrm{d}u.
    \]
    Moreover, \eqref{eqn:AW_is_0} implies 
    \[ 
        \int_0^1 \alpha(u)g_N(u) \mathrm{d}u=\int_0^1 \alpha(u) \E[f(\Y^{N,u})]\mathrm{d}u,
    \]
    and therefore
    \begin{equation*}
        \int_0^1 \alpha(u)\E [f(\X^u)] \mathrm{d}u = \lim_{N \rightarrow \infty} \int_0^1 \alpha(u)g_N(u)\d u = \lim_{N \rightarrow \infty} \int_0^1 \alpha(u) \E[f(\Y^{N,u})] \mathrm{d}u. 
    \end{equation*}
    It is straightforward to verify that $\W \mapsto \int_0^1 \alpha(u) \E[f(\W^{u})] \mathrm{d}u$ is a continuous and bounded function on the canonical space of $\Fc\Pc_p( L^p(\Xc))$ and thus, by adapted weak convergence,
    \begin{equation*}
        \int_0^1 \alpha(u)\E [f(\X^u)] \mathrm{d}u = \lim_{N \rightarrow \infty} \int_0^1 \alpha(u) \E[f(\Y^{N,u})] \mathrm{d}u = \int_0^1 \alpha(u) \E[f(\Y^u)] \mathrm{d}u,
    \end{equation*}
    for every $\alpha \in C_b(\R)$.
    We conclude that $\E [f(\X^u)]=\E [f(\Y^u)]$ for almost every $u \in [0,T].$ Moreover, $u \mapsto \E [f(\X^u)]$ is continuous due to \Cref{lem:inform_process}. Further, since $\tilde\eta$ is concentrated on $AC^p(\Xc),$ we have that $u \mapsto \Y^u$ is absolutely continuous on $\Fc\Pc_p$, by \Cref{prop:AC1}. Note that the assumptions therein are satisfied thanks to \eqref{eqn:AW_is_0} and \eqref{eqn:bound_energy}. Consequently, using once again \Cref{lem:inform_process}, the map $u \mapsto \E [f(\Y^u)]$ is continuous as well. We conclude that
    $\E [f(\X^u)]=\E[ f(\Y^u)]$ for every $u \in [0,1]$, and thus $\Ac\Wc_p(\X^u,\Y^u)=0.$ This completes the proof.
\end{proof}

\appendix
\section{Appendix} \label{sec:appendix}

\subsection{Canonical space for filtered processes }\label{sec::canonical_filtered_process}

We briefly discuss the canonical space of filtered processes from \cite[Section 3.1]{BaBePa21}, which avoids the need for proper classes and allows us to remain within the framework of set theory when discussing filtered processes. More precisely, there is a correspondence between filtered processes and probability measures on a fixed filtered space. Given the complexity of this space and the fact that it is not our main focus, we merely outline the key ideas and refer the reader to \cite[Section 3.1]{BaBePa21} for further details.

Let $\X=\big(\Omega^\X,\Fc^\X,\F^\X,\P^\X,X \big)$ be a filtered process. We define the information process of $\X$ recursively backward in time by 
\[ {\rm ip}_T(\X)\coloneqq X_T,\quad {\rm ip}_t(\X)\coloneqq \big(X_t,{\rm Law}_{\P^\X}({\rm ip}_{t+1}(\X)\,\vert \Fc^\X_t) \big),\; t \in \{1,\ldots,T-1\}. \]
It follows from \cite[Theorem 3.10]{BaBePa21} that $\X$ can be identified with the law of ${\rm ip}_1(\X)$, as well as with the canonical representative $\overline{\X}$, which is constructed using the base space for the information process, denoted by $\Zc = \prod_{t=1}^T \Zc_t$, in the following sense: Let $\X^1$ and $\X^2$ be two filtered processes, and denote by $\overline{\X}^1$ and $\overline{\X}^2$ their respective canonical representatives. We have that
\begin{equation} \label{eqn:isometry} 
\Ac\Wc_p(\X^1,\X^2)=\Ac\Wc_p(\overline{\X}^1,\overline{\X}^2)=\Wc_p\big({\rm Law}( {\rm ip}_1(\X^1)),{\rm Law} ({\rm ip}_1(\X^2)) \big),\end{equation} 
where $\Wc_p\big({\rm Law} ({\rm ip}_1(\X^1)),{\rm Law} ({\rm ip}_1(\X^2)) \big)$ is the $p$-Wasserstein distance on $\Pc_p(\Zc_1)$, and $\Zc$ is equipped with an appropriate $p$-distance.
We refer to \cite[Section 3]{BaBePa21} for formal definitions of these objects.

\begin{remark}\label{rem::fp_as_sets}
    The collection of canonical filtered processes, denoted by $\CFP$ (resp.\ $\CFP_p$ for those with finite $p$-th moments) forms a set, and the adapted Wasserstein distance on this set defines a metric.\footnote{Here, we can, without loss of generality, replace $d$ with $d \wedge 1$ to avoid infinite values of the distance and ensure that we obtain a well-defined pseudo-metric.}
    Thus, whenever a construction requires proper sets or metric spaces, we can interpret $\FP$ and $\FP_p$ as $\CFP$ and $\CFP_p$, respectively. Similar challenges appear with the Gromov–Hausdorff distance and can also be alleviated in that setting as well; see \cite[Remark 7.2.5]{burago2001course}.
\end{remark}

As a direct consequence of \eqref{eqn:isometry}, we have that equivalence of two filtered processes can be tested with continuous and bounded functions of their information processes.

\begin{lemma} \label{lem:inform_process} Let $\X,\Y\in \Fc\Pc$. Then $\X \sim_{{\rm FP}} \Y$ if and only if for every continuous and bounded function $g: \Zc \rightarrow \R$ it holds that $\E[g(\X)]=\E[g(\Y)],$ where
\[\E[g(\X)] \coloneqq\int_{\Omega^\X} g({\rm ip}(\X)(\omega^\X)) \P^\X(\mathrm{d}\omega^\X) =\int_{\Zc} g(z) ({\rm ip}(\X)_{\#}\P^\X)(\mathrm{d}z). \]
Moreover, the map $\Fc\Pc \ni \X \mapsto \E[g(\X)]$ is continuous in the adapted weak topology.
\end{lemma}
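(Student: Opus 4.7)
The plan is to reduce both claims to elementary facts about Borel probability measures on the Polish space $\Zc$, exploiting the identification of a filtered process with the law of its information process that was recalled in this subsection.

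The main input is \cite[Theorem 3.10]{BaBePa21} together with the isometry \eqref{eqn:isometry}, which allows us to identify $\X \sim_{\FP} \Y$ with the equality of pushforward measures ${\rm ip}(\X)_\#\P^\X = {\rm ip}(\Y)_\#\P^\Y$ on $\Zc$. Note that $\sim_{\FP}$ is defined via the truncated cost $d_{\Xc,p}\wedge 1$, which ensures the infimum is finite for arbitrary (not necessarily $p$-integrable) filtered processes; the same construction underlying \eqref{eqn:isometry} yields the corresponding truncated analogue, from which one still reads off that a bicausal coupling with zero truncated expected cost must concentrate on the diagonal of $\Zc^2$, so the two information-process laws agree. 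The converse direction is even more direct: if the laws coincide, then pulling back the diagonal coupling on $\Zc$ through the information process maps produces a bicausal coupling on $\Omega^\X\times\Omega^\Y$ with zero cost.

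Once this reduction is in place, the stated iff characterisation is an immediate consequence of Portmanteau's theorem on the Polish space $\Zc$: two Borel probability measures on $\Zc$ coincide if and only if they integrate every $g \in C_b(\Zc)$ identically. Combining this with the change-of-variables identity
\[
\E[g(\X)] = \int_{\Omega^\X} g\big({\rm ip}(\X)(\omega^\X)\big)\P^\X(\d\omega^\X) = \int_\Zc g(z)\,\big({\rm ip}(\X)_\#\P^\X\big)(\d z),
\]
which is built into the definition of $\E[g(\X)]$, translates the equality of measures into the equality of the two functionals for all $g \in C_b(\Zc)$.

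For the continuity claim, the argument is essentially tautological once one unpacks the definition of the adapted weak topology on $\FP$. By \cite[Section 3]{BaBePa21}, this topology is precisely the one induced by the embedding $\X \mapsto {\rm ip}(\X)_\#\P^\X$ into $\Pc(\Zc)$ equipped with the usual weak topology. Hence adapted weak convergence $\X^n \to \X$ is exactly ${\rm ip}(\X^n)_\#\P^{\X^n} \to {\rm ip}(\X)_\#\P^\X$ in $\Pc(\Zc)$, which by definition gives $\int g\,d({\rm ip}(\X^n)_\#\P^{\X^n}) \to \int g\,d({\rm ip}(\X)_\#\P^\X)$, i.e.\ $\E[g(\X^n)] \to \E[g(\X)]$, for every $g \in C_b(\Zc)$. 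The only minor subtlety in the whole argument is verifying the compatibility between the truncated cost used in the definition of $\sim_{\FP}$ and the isometry \eqref{eqn:isometry} stated for $\Ac\Wc_p$; this is the only point where one needs to be slightly attentive, but it follows from the standard observation that vanishing expected cost with respect to any lower semicontinuous pseudo-metric equivalent to the usual metric near the diagonal forces concentration on that diagonal.
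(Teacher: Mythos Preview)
Your proposal is correct and follows essentially the same route as the paper: reduce $\sim_{\FP}$ to equality of the information-process laws on the Polish space $\Zc$ via the results of \cite{BaBePa21}, then invoke the standard fact that Borel probabilities on a Polish space are determined by (and weakly converge iff they converge against) all $g\in C_b$. The paper compresses all of this into a one-line citation of \cite[Theorem~4.11]{BaBePa21} together with the Polishness of $\Zc$; your version simply unpacks the same argument, and the extra care you take with the truncated cost in the definition of $\sim_{\FP}$ is a reasonable clarification that the paper leaves implicit.
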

\begin{proof} The statement is a direct consequence of \cite[Theorem 4.11]{BaBePa21} and the fact that $\Zc$ is a Polish space.
\end{proof}

\subsection{Metric space-valued \texorpdfstring{$L^p$}. functions}\label{sec_lp}

For a metric space $(\cY,d_\cY)$ and $p \in [1,\infty)$, we denote by $\cL^p(\cY)$ the set of Borel-measurable maps $f:[0,1] \longrightarrow \cY$ satisfying
\[
    \int_0^1 d^p_{\cY}(f(u),y_0)\d u < \infty
\]
for some (and hence all) $y_0 \in \cY$. We then endow $\cL^p(\cY)$ with the pseudo-metric
\[
    d_{\cL^p}(f,g) \coloneqq \bigg(\int_0^1 d^p_{\cY}(f(u),g(u))\d u\bigg)^{1/p},
\]
and denote by $L^p(\cY)$ the metric space consisting of the collection of the corresponding equivalence classes. If $\cY$ is complete, then so is $L^p(\cY)$; this follows the proof of \cite[Theorem~4.5]{legall2022measure}, replacing absolute values by the distance $d_\cY$ and $L^p$-norms by the distance $d_{\cL^\smalltext{p}}$. Moreover, if $\cY$ is separable, then so is $L^p(\cY)$; see the proof of \cite[Proposition~3.4.5]{cohn2013measure}.

We rely on the following compactness criterion; see \cite[Section~2.3]{lisini2007characterization} for further detailed references.

\begin{lemma}\label{lem::compactness_criterion}
    Let $p \in [1,\infty)$, and let $\cY$ be a metric space. A family $\mathscr{F} \subseteq L^p(\cY)$ is precompact, if
    \begin{enumerate}[label=(\roman*)]
        \item $\mathscr{F}$ is bounded,
        \item $\displaystyle \lim_{h \downarrow\downarrow 0} \sup_{f \in \mathscr{F}} \int_0^{T-h} d_{\cY}(f(u+h),f(u))\d u = 0$, and
        \item there exists $\Phi : \cY \longrightarrow [0,\infty]$, with compact sublevels $\{y \in \cY \,|\, \Phi(y) \leq c\}$, $c \in [0,\infty)$, such that
        \[
            \sup_{f \in \mathscr{F}} \int_0^T \Phi(f(u))\d u < \infty.
        \]
    \end{enumerate}
\end{lemma}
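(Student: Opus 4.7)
My plan is to prove total boundedness of $\mathscr{F}$, which combined with completeness of $L^p(\cY)$ (when $\cY$ is complete, as noted above the statement) yields precompactness. The approach is the Kolmogorov--Riesz--Fréchet compactness strategy adapted to the metric-valued setting: for each mesh size $h>0$ I would construct a discretization $\mathcal D_h f$ of $f$ such that, first, $\{\mathcal D_h f:f\in\mathscr{F}\}$ is precompact in $L^p(\cY)$ for fixed $h$, and second, $\sup_{f\in\mathscr{F}} d_{\cL^p}(\mathcal D_h f,f)\to 0$ as $h\downarrow 0$. A standard covering argument combining the two then produces a finite $\eps$-net for $\mathscr{F}$.

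\medskip

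I would define $\mathcal D_h f$ to be piecewise constant on the grid $\{kh\}_{k=0}^{M-1}$ with $M=\lceil T/h\rceil$, and values $f(t_k^f)$ for points $t_k^f\in I_k\coloneqq[kh,(k+1)h)$ selected so as to simultaneously satisfy the Chebyshev-type bounds
\[
    \Phi\bigl(f(t_k^f)\bigr)\leq\tfrac{4}{h}\!\int_{I_k}\!\Phi(f(u))\,\d u\quad\text{and}\quad\int_{I_k}\!d_\cY(f(u),f(t_k^f))\,\d u\leq\tfrac{4}{h}\!\int_{I_k}\!\!\int_{I_k}\!d_\cY(f(u),f(t))\,\d u\,\d t,
\]
whose joint feasibility follows from Chebyshev: each bound fails on a set of measure at most $h/4$. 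Measurability of $f\mapsto t_k^f$ is ensured by Kuratowski--Ryll-Nardzewski applied to the set-valued map sending $f$ to the Borel set of feasible points in $I_k$. For fixed $h$, $\mathcal D_h f$ is then determined by the tuple $(f(t_0^f),\ldots,f(t_{M-1}^f))$, each coordinate of which lies in the $\Phi$-sublevel $\{\Phi\leq 4C/h\}$ with $C\coloneqq\sup_{f\in\mathscr{F}}\int_0^T\Phi(f(u))\d u<\infty$ by (iii); this sublevel is compact in $\cY$ by hypothesis, so the tuples range over a compact subset of $\cY^M$ and $\{\mathcal D_h f\}$ lies in a compact subset of $L^p(\cY)$.

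\medskip

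The key step is then the uniform vanishing of the discretization error. Using the second selection bound together with the symmetrization $\int_{I_k}\!\int_{I_k}d_\cY(f(u),f(t))\,\d u\,\d t\leq\int_{-h}^{h}\!\int_{I_k\cap(I_k-\sigma)}d_\cY(f(u+\sigma),f(u))\,\d u\,\d\sigma$, summing over $k$ reduces the problem to bounding $\tfrac{1}{h}\int_{-h}^h\!\int_0^T d_\cY(f(u+\sigma),f(u))\,\d u\,\d\sigma$, which tends to $0$ uniformly in $f\in\mathscr{F}$ by (ii). To upgrade this $L^1$-in-distance control to the $L^p$-in-distance control actually required, I would split $[0,T]$ into the set where both $f(u)$ and $f(t_k^f)$ lie in the compact sublevel $K_R\coloneqq\{\Phi\leq R\}$ and its complement; on the former, $d_\cY^p\leq(\mathrm{diam}\,K_R)^{p-1}d_\cY$ reduces $L^p$ to $L^1$, while on the complement, Chebyshev applied to (iii) makes the measure arbitrarily small as $R\uparrow\infty$, and boundedness (i) controls the remaining tail via uniform integrability. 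The \textbf{main obstacle} is precisely this $L^1$-to-$L^p$ upgrade: without the compactness of $\Phi$-sublevels granted by (iii), the values $f(u)$ could escape to infinity in $\cY$ and the tail contribution would not vanish; hence (iii) is indispensable, while the measurable-selection step is subsidiary by comparison.
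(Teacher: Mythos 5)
Your Kolmogorov--Riesz--Fr\'echet strategy (discretize at mesh $h$, confine the discretized functions to a compact set via the $\Phi$-sublevels, send $h\downarrow 0$ using integral equicontinuity) is the standard approach behind the cited reference, and the Chebyshev selection, the compactness of $\{\mathcal D_h f : f \in \mathscr F\}$ for fixed $h$, and the $L^1$-in-distance error estimate via symmetrization are all correct. (The measurable-selection remark is superfluous: total boundedness only requires each $\mathcal D_h f$ to exist, not that $f\mapsto\mathcal D_h f$ be measurable.)

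The gap is exactly where you flag the ``main obstacle'': the passage from $L^1$- to $L^p$-in-distance control. You assert that ``boundedness (i) controls the remaining tail via uniform integrability'', but $L^p$-boundedness of $\mathscr F$ does \emph{not} imply $p$-uniform integrability of $u\mapsto d_\cY^p(f(u),y_0)$ uniformly over $f\in\mathscr F$ --- avoiding precisely this failure is the point of the uniform-integrability hypothesis in the Fr\'echet--Kolmogorov theorem. Concretely, take $\cY=\R$, $p=2$, $T=1$, $\Phi(y)=|y|$, $f_n=n\1_{[0,1/n^2]}$: then (i) holds ($\|f_n\|_{L^2}=1$), (iii) holds ($\int_0^1\Phi(f_n(u))\,\d u=1/n$ and sublevels of $\Phi$ are the compact intervals $[-c,c]$), and (ii) holds because
\[
\sup_n\int_0^{1-h}|f_n(u+h)-f_n(u)|\,\d u\le\sqrt{h}\xrightarrow[h\downarrow 0]{}0,
\]
yet $\{f_n\}_n$ is not $L^2$-precompact since $f_n\to 0$ in $L^1$ while $\|f_n\|_{L^2}\equiv 1$. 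Thus no proof can close from (i)--(iii) as literally written: the hypothesis your $K_R$/$K_R^c$ split actually needs is absent. In the Rossi--Savar\'e compactness theorem underlying \cite[Section~2.3]{lisini2007characterization}, condition (i) is $p$-uniform integrability, not mere boundedness; and in the paper's actual application (Step~1 of the proof of \Cref{prop:AC_curve_in_FP}) the $W^{1,p}$-seminorm term of the coercive functional supplies $L^p$-integral equicontinuity at a linear rate, which in turn forces $p$-uniform integrability. If you strengthen (i) to $p$-uniform integrability (or replace $d_\cY$ by $d_\cY^p$ in (ii)), your argument closes; as it stands, the tail estimate fails.
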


\bibliography{references}

\begin{thebibliography}{42}
\providecommand{\natexlab}[1]{#1}
\providecommand{\url}[1]{\texttt{#1}}
\expandafter\ifx\csname urlstyle\endcsname\relax
  \providecommand{\doi}[1]{doi: #1}\else
  \providecommand{\doi}{doi: \begingroup \urlstyle{rm}\Url}\fi

\bibitem[Acciaio et~al.(2020)Acciaio, Backhoff-Veraguas, and
  Zalashko]{AcBaZa20}
B.~Acciaio, J.~Backhoff-Veraguas, and A.~Zalashko.
\newblock Causal optimal transport and its links to enlargement of filtrations
  and continuous-time stochastic optimization.
\newblock \emph{Stochastic Processes and their Applications}, 130\penalty0
  (5):\penalty0 2918--2953, 2020.

\bibitem[Acciaio et~al.(2021)Acciaio, Beiglb{\"o}ck, and
  Pammer]{acciaio2021weak}
B.~Acciaio, M.~Beiglb{\"o}ck, and G.~Pammer.
\newblock Weak transport for non‐convex costs and model‐independence in a
  fixed‐income market.
\newblock \emph{Mathematical Finance}, 31\penalty0 (4):\penalty0 1423--1453,
  2021.

\bibitem[Acciaio et~al.(2024{\natexlab{a}})Acciaio, Kratsios, and
  Pammer]{acciaio2024designing}
B.~Acciaio, A.~Kratsios, and G.~Pammer.
\newblock Designing universal causal deep learning models: The geometric
  (hyper) transformer.
\newblock \emph{Mathematical Finance}, 34\penalty0 (2):\penalty0 671--735,
  2024{\natexlab{a}}.

\bibitem[Acciaio et~al.(2024{\natexlab{b}})Acciaio, Kr{\v{s}}ek, and
  Pammer]{AcKrPa23a}
B.~Acciaio, D.~Kr{\v{s}}ek, and G.~Pammer.
\newblock Multicausal transport: barycenters and dynamic matching.
\newblock \emph{arXiv preprint arXiv:2401.12748}, 2024{\natexlab{b}}.

\bibitem[Ambrosio and Gigli(2013)]{Ambrosio2013users}
L.~Ambrosio and N.~Gigli.
\newblock A user’s guide to optimal transport. modelling and optimisation of
  flows on networks, 1--155.
\newblock \emph{Lecture Notes in Math}, 2062, 2013.

\bibitem[Ambrosio et~al.(2008)Ambrosio, Gigli, and
  Savar{\'e}]{ambrosio2008gradient}
L.~Ambrosio, N.~Gigli, and G.~Savar{\'e}.
\newblock \emph{Gradient flows: in metric spaces and in the space of
  probability measures}.
\newblock Springer Science \& Business Media, 2008.

\bibitem[Backhoff-Veraguas et~al.(2020{\natexlab{a}})Backhoff-Veraguas, Bartl,
  Beiglb{\"o}ck, and Eder]{BaBaBeEd19a}
J.~Backhoff-Veraguas, D.~Bartl, M.~Beiglb{\"o}ck, and M.~Eder.
\newblock Adapted {W}asserstein distances and stability in mathematical
  finance.
\newblock \emph{Finance and Stochastics}, 24\penalty0 (3):\penalty0 601--632,
  2020{\natexlab{a}}.

\bibitem[Backhoff-Veraguas et~al.(2020{\natexlab{b}})Backhoff-Veraguas, Bartl,
  Beiglb{\"o}ck, and Eder]{BaBaBeEd19b}
J.~Backhoff-Veraguas, D.~Bartl, M.~Beiglb{\"o}ck, and M.~Eder.
\newblock All adapted topologies are equal.
\newblock \emph{Probability Theory and Related Fields}, 178:\penalty0
  1125--1172, 2020{\natexlab{b}}.

\bibitem[Bartl and Wiesel(2023)]{BaWi23}
D.~Bartl and J.~Wiesel.
\newblock Sensitivity of multiperiod optimization problems with respect to the
  adapted {W}asserstein distance.
\newblock \emph{SIAM Journal on Financial Mathematics}, 14\penalty0
  (2):\penalty0 704--720, 2023.

\bibitem[Bartl et~al.(2024)Bartl, Beiglb{\"o}ck, and Pammer]{BaBePa21}
D.~Bartl, M.~Beiglb{\"o}ck, and G.~Pammer.
\newblock The {W}asserstein space of stochastic processes.
\newblock \emph{Journal of the European Mathematical Society}, 2024.

\bibitem[Bayraktar and Han(2023)]{BaHa23}
E.~Bayraktar and B.~Han.
\newblock Fitted value iteration methods for bicausal optimal transport.
\newblock \emph{arXiv preprint arXiv:2306.12658}, 2023.

\bibitem[Beiglb{\"o}ck et~al.(2024)Beiglb{\"o}ck, Pfl{\"u}gl, and
  Schrott]{beiglbock2024probabilistic}
M.~Beiglb{\"o}ck, S.~Pfl{\"u}gl, and S.~Schrott.
\newblock A probabilistic view on the adapted {W}asserstein distance.
\newblock \emph{arXiv preprint arXiv:2406.19810}, 2024.

\bibitem[Bion-Nadal and Talay(2019)]{BiTa19}
J.~Bion-Nadal and D.~Talay.
\newblock On a {W}asserstein-type distance between solutions to stochastic
  differential equations.
\newblock \emph{Ann. Appl. Probab.}, 29\penalty0 (3):\penalty0 1609--1639,
  2019.

\bibitem[Bogachev(2007)]{bogachev2007measure}
V.~I. Bogachev.
\newblock \emph{Measure theory. {V}ol. {I}, {II}}.
\newblock Springer, 2007.

\bibitem[Bonnier et~al.(2023)Bonnier, Liu, and Oberhauser]{BoLiOb23}
P.~Bonnier, C.~Liu, and H.~Oberhauser.
\newblock Adapted topologies and higher rank signatures.
\newblock \emph{The Annals of Applied Probability}, 33\penalty0 (3):\penalty0
  2136--2175, 2023.

\bibitem[Burago et~al.(2001)Burago, Burago, and Ivanov]{burago2001course}
D.~Burago, Y.~Burago, and S.~Ivanov.
\newblock \emph{A course in metric geometry}.
\newblock American Mathematical Society, 2001.

\bibitem[Cohn(2013)]{cohn2013measure}
D.~L. Cohn.
\newblock \emph{Measure theory}.
\newblock Birkh\"{a}user/Springer, second edition, 2013.

\bibitem[Eckstein and Pammer(2024)]{eckstein2023computational}
S.~Eckstein and G.~Pammer.
\newblock Computational methods for adapted optimal transport.
\newblock \emph{Ann. Appl. Probab.}, 34\penalty0 (1A):\penalty0 675--713, 2024.

\bibitem[Hoover(1991)]{hoover1991convergence}
D.~N. Hoover.
\newblock Convergence in distribution and {S}korokhod convergence for the
  general theory of processes.
\newblock \emph{Probability theory and related fields}, 89:\penalty0 239--259,
  1991.

\bibitem[Horvath et~al.(2023)Horvath, Lemercier, Liu, Lyons, and
  Salvi]{HoLeLiLySa23}
B.~Horvath, M.~Lemercier, C.~Liu, T.~Lyons, and C.~Salvi.
\newblock Optimal stopping via distribution regression: a higher rank signature
  approach.
\newblock \emph{arXiv preprint arXiv:2304.01479}, 2023.

\bibitem[Jiang and Lim(2025)]{jiang2025transfer}
Y.~Jiang and F.~R. Lim.
\newblock A transfer principle for computing the adapted wasserstein distance
  between stochastic processes.
\newblock \emph{arXiv preprint arXiv:2505.21337}, 2025.

\bibitem[Jiang and Obloj(2024)]{jiang2024sensitivity}
Y.~Jiang and J.~Obloj.
\newblock Sensitivity of causal distributionally robust optimization.
\newblock \emph{arXiv preprint arXiv:2408.17109}, 2024.

\bibitem[Jordan et~al.(1998)Jordan, Kinderlehrer, and
  Otto]{jordan1998variational}
R.~Jordan, D.~Kinderlehrer, and F.~Otto.
\newblock The variational formulation of the {F}okker--{P}lanck equation.
\newblock \emph{SIAM journal on mathematical analysis}, 29\penalty0
  (1):\penalty0 1--17, 1998.

\bibitem[Jourdain and Pammer(2024)]{JoPa23}
B.~Jourdain and G.~Pammer.
\newblock An extension of martingale transport and stability in robust finance.
\newblock \emph{Electronic Journal of Probability}, 29:\penalty0 1--30, 2024.

\bibitem[Lacker(2023)]{lacker2023independent}
D.~Lacker.
\newblock Independent projections of diffusions: Gradient flows for variational
  inference and optimal mean field approximations.
\newblock \emph{arXiv preprint arXiv:2309.13332}, 2023.

\bibitem[Lambert et~al.(2022)Lambert, Chewi, Bach, Bonnabel, and
  Rigollet]{lambert2022variational}
M.~Lambert, S.~Chewi, F.~Bach, S.~Bonnabel, and P.~Rigollet.
\newblock Variational inference via {W}asserstein gradient flows.
\newblock \emph{Advances in Neural Information Processing Systems},
  35:\penalty0 14434--14447, 2022.

\bibitem[Le~Gall(2022)]{legall2022measure}
J.-F. Le~Gall.
\newblock \emph{Measure theory, probability, and stochastic processes}.
\newblock Springer, 2022.

\bibitem[Lisini(2007)]{lisini2007characterization}
S.~Lisini.
\newblock Characterization of absolutely continuous curves in {W}asserstein
  spaces.
\newblock \emph{Calc. Var. Partial Differential Equations}, 28\penalty0
  (1):\penalty0 85--120, 2007.

\bibitem[Lisini(2016)]{lisini2016absolutely}
S.~Lisini.
\newblock Absolutely continuous curves in extended {W}asserstein--{O}rlicz
  spaces.
\newblock \emph{ESAIM: Control, Optimisation and Calculus of Variations},
  22\penalty0 (3):\penalty0 670--687, 2016.

\bibitem[Mielke et~al.(2014)Mielke, Peletier, and Renger]{mielke2004}
A.~Mielke, M.~A. Peletier, and D.~R.~M. Renger.
\newblock On the relation between gradient flows and the large-deviation
  principle, with applications to {M}arkov chains and diffusion.
\newblock \emph{Potential Analysis}, 41:\penalty0 1293--1327, 2014.

\bibitem[Mokrov et~al.(2021)Mokrov, Korotin, Li, Genevay, Solomon, and
  Burnaev]{mokrov2021large}
P.~Mokrov, A.~Korotin, L.~Li, A.~Genevay, J.~M. Solomon, and E.~Burnaev.
\newblock Large-scale {W}asserstein gradient flows.
\newblock \emph{Advances in Neural Information Processing Systems},
  34:\penalty0 15243--15256, 2021.

\bibitem[Nielsen and Sun(2021)]{NiSu20}
F.~Nielsen and K.~Sun.
\newblock Chain rule optimal transport.
\newblock In \emph{Progress in Information Geometry: Theory and Applications},
  pages 191--217. Springer, 2021.

\bibitem[Otto(2001)]{otto2001}
F.~Otto.
\newblock The geometry of dissipative evolution equations: The porous medium
  equation.
\newblock \emph{Communications in Partial Differential Equations}, 26\penalty0
  (1-2):\penalty0 101--174, 2001.

\bibitem[Pflug and Pichler(2014)]{pflug2014multistage}
G.~C. Pflug and A.~Pichler.
\newblock \emph{{M}ultistage {S}tochastic {O}ptimization}.
\newblock Springer, 2014.

\bibitem[Pflug and Pichler(2012)]{PfPi12}
G.~Ch. Pflug and A.~Pichler.
\newblock A distance for multistage stochastic optimization models.
\newblock \emph{SIAM Journal on Optimization}, 22\penalty0 (1):\penalty0 1--23,
  2012.

\bibitem[Pinzi and Savar\'{e}(2025)]{pinzi2025}
A.~Pinzi and G.~Savar\'{e}.
\newblock Nested superposition principle: From the continuity equation on
  random measures to interacting particle systems.
\newblock \emph{In preparation}, 2025.

\bibitem[Sauldubois and Touzi(2024)]{sauldubois2024first}
N.~Sauldubois and N.~Touzi.
\newblock First order martingale model risk and semi-static hedging.
\newblock \emph{arXiv preprint arXiv:2410.06906}, 2024.

\bibitem[Stepanov and Trevisan(2017)]{stepanov2017three}
E.~Stepanov and D.~Trevisan.
\newblock Three superposition principles: currents, continuity equations and
  curves of measures.
\newblock \emph{Journal of Functional Analysis}, 272\penalty0 (3):\penalty0
  1044--1103, 2017.

\bibitem[Vershik(1970)]{Ve70}
A.~M. Vershik.
\newblock Decreasing sequences of measurable partitions and their applications.
\newblock \emph{Sov. Mat. Dokl.}, 11\penalty0 (4):\penalty0 1007 -- 1011, 1970.

\bibitem[Vershik(1994)]{Ve94}
A.~M. Vershik.
\newblock Theory of decreasing sequences of measurable partitions.
\newblock \emph{Algebra i Analiz}, 6\penalty0 (4):\penalty0 1--68, 1994.

\bibitem[Villani(2009)]{villani2009optimal}
C.~Villani.
\newblock \emph{{O}ptimal {T}ransport}.
\newblock Springer, 2009.

\bibitem[Yan et~al.(2024)Yan, Wang, and Rigollet]{yan2024learning}
Y.~Yan, K.~Wang, and P.~Rigollet.
\newblock Learning {G}aussian mixtures using the {W}asserstein--{F}isher--{R}ao
  gradient flow.
\newblock \emph{The Annals of Statistics}, 52\penalty0 (4):\penalty0
  1774--1795, 2024.

\end{thebibliography}

\end{document}